\documentclass[reqno,11pt]{amsart}
\usepackage{relsize}
\usepackage{scalerel}
\usepackage{stackengine,wasysym}
\usepackage{todonotes}
\usepackage{xcolor}
\usepackage{mathrsfs}
\usepackage{dsfont}
\usepackage{mathtools}
\usepackage[colorlinks,
            linkcolor=blue,
            anchorcolor=blue,
            citecolor=blue]{hyperref}
\usepackage[sort,nocompress]{cite}

\allowdisplaybreaks

\numberwithin{equation}{section}

	\parskip 1ex
\usepackage{amsmath,amssymb} 
\usepackage[toc,page]{appendix}
\usepackage{bbm}
\usepackage{verbatim}
\allowdisplaybreaks
\theoremstyle{plain}
\newtheorem{lemma}{Lemma}[section]
\newtheorem{theorem}[lemma]{Theorem}
\newtheorem{corollary}[lemma]{Corollary}

\newtheorem{proposition}[lemma]{Proposition}

\theoremstyle{remark}
\newtheorem{remark}{Remark}

\setlength{\textwidth}{6.38in}
\addtolength{\oddsidemargin}{-.68in}
\addtolength{\evensidemargin}{-.68in}

\def\G{\mathcal{G}}

\title[Regularization by noise for the inviscid PE]{Regularization by noise for the inviscid primitive equations}
\author[R. Hu]{Ruimeng Hu}
\address[R. Hu]
{	Department of Mathematics \\
Department of Statistics and Applied Probability \\
     University of California  \\
	Santa Barbara, CA 93106, USA.} \email{rhu@ucsb.edu}

\author[Q. Lin]{Quyuan Lin}
\address[Q. Lin]
{	School of Mathematical and Statistical Sciences \\
Clemson University\\
Clemson, SC 29634, USA.} \email{quyuanl@clemson.edu}

 \author[R. Liu]{Rongchang Liu}
\address[R. Liu]{Department of Mathematics, University of Arizona, Tucson, AZ 85721}
\email{lrc666@math.arizona.edu}

\date{July 31, 2024}

\begin{document}

\begin{abstract}
    The deterministic inviscid primitive equations (also called the hydrostatic Euler equations) are known to be ill-posed in Sobolev spaces and in Gevrey classes of order strictly greater than 1, and some of their analytic solutions exist only locally in time and exhibit finite-time blowup. This work demonstrates that introducing suitable random noise can restore the local well-posedness and prevent finite-time blowups. Specifically, random diffusion addresses the ill-posedness in certain Gevrey classes, allowing us to establish the local well-posedness almost surely and the global existence of solutions with high probability. In the case of random damping (linear multiplicative noise), the noise prevents analytic solutions from forming singularities in finite time, resulting in globally existing solutions with high probability.
\end{abstract}

\maketitle

MSC Subject Classifications: 35Q35, 60H15, 60H50, 76M35\\

Keywords: inviscid primitive equations, hydrostatic Euler equations, pseudo-differential noise, Gevrey class, global well-posedness

\section{Introduction}

We are interested in the following three-dimensional stochastic inviscid primitive equations (PE) 
\begin{subequations}\label{PE-system} 
\begin{align}
    &d V + (V\cdot \nabla_h V + w\partial_z V +\nabla_h p ) dt = F(V)dW_t  , \label{PE-1}  
    \\
    &\partial_z p  =0 , \label{PE-2}
    \\
    &\nabla_h \cdot V + \partial_z w =0,  \label{PE-3} 
\end{align}
\end{subequations}
which is also known as the stochastic hydrostatic Euler equations. Here, the horizontal velocity field $V=(u, v)$, the vertical velocity $w$, and the pressure $p$ are the unknown quantities. We employ the notations $\nabla_h = (\partial_x, \partial_y)$ and $\nabla = (\partial_x, \partial_y, \partial_z)$ to distinguish between the horizontal and complete gradients. The Wiener process is denoted by $W_t$, and $F(V)dW_t$ represents the driving noise, the form of which will be specified later. The domain of interest is the three-dimensional torus $\mathbb T^3 = \mathbb R^3/\mathbb Z^3$ with unit volume, and the boundary conditions are
\begin{equation}\label{BC-T3}
    V \text{ is periodic in }  (x,y,z) \text{ with period }  1,  \qquad V  \text{ is even in }  z  \text{ and }  w  \text{ is odd in }  z.
\end{equation} 
Observe that the space of periodic functions with respect to $z$ with the symmetry condition \eqref{BC-T3} is
invariant under the dynamics of system \eqref{PE-system}.

The viscous PE is derived from the Navier-Stokes equations \cite{azerad2001mathematical,li2019primitive,li2022primitive,furukawa2020rigorous}, while the inviscid PE is derived from the Euler equations \cite{brenier2003remarks,grenier1999derivation,masmoudi2012h} by taking the hydrostatic limit. This model finds extensive application when the vertical-to-horizontal scale ratio is small. For example, it is particularly relevant in the study of large-scale oceanic and atmospheric dynamics, where the vertical scale (a few kilometers in oceans and 10-20 kilometers in the atmosphere) is significantly less than the horizontal scale, which spans thousands of kilometers.

Our work focuses on the well-posedness of the stochastic PE. Inspired by \cite{buckmaster2020surface, rosenzweig2023global,glatt2014local},  we examine solutions to the PE perturbed by a specific type of noise, that is,  $F(V)=\nu|\nabla|^s V$, where $\nu>0$ and $0\leq s\leq 1$ are constants, and $|\nabla|= \sqrt{-\Delta}$.
We further distinguish two different cases: random diffusion in which $s\in (0,1]$ and random damping in which $s=0$. Our primary focus is on the regularization effects these noises have on the system. Specifically, random diffusion helps to overcome the ill-posedness in Gevrey class of order $\frac1s$, allowing for the local well-posedness almost surely and the 
global existence of solutions with high probability. In contrast, random damping prevents the finite-time blowup for analytic solutions with high probability. Following the spirit of previous works \cite{buckmaster2020surface, rosenzweig2023global,glatt2014local}, we interpret the equation \eqref{PE-system} with such a pseudo-differential noise as an equivalent random PDE (see \eqref{e.070201} below) through a transformation involving a geometric Brownian motion and the multiplier $\nu|\nabla|^s$. The main results of our paper are summarized below. For the precise definition of the Gevrey class, see \eqref{e.070801} in the next section.  
\begin{theorem}
    Consider system \eqref{PE-system} subject to random noise of the form $F(V)=\nu|\nabla|^s V$.   
    \begin{itemize}
        \item (Random diffusion, Theorem~\ref{theorem:global}.) Let $\eta, \alpha>0$, $s\in(\frac45,1]$ and $\sigma\in(\frac8{5s},2)$. With probability one, system \eqref{PE-system} is locally well-posed with solutions in the Gevrey class $\G_{\eta}^{\sigma,s}$ for each deterministic initial data in $\G_{\alpha+\eta}^{\sigma,s}$. Moreover, for any $\varepsilon>0$, there is a sufficiently large constant $\alpha=\alpha(\varepsilon)$ such that for any given deterministic initial data in $\G_{\alpha+\eta}^{\sigma,s}$, by choosing a sufficiently large diffusion intensity $\nu$, the corresponding local solutions can be extended to global ones in $\G_{\eta}^{\sigma,s}$ with probability at least $1-\varepsilon$. 
        \item (Random damping, Theorem~\ref{thm:damping}.) Let $s=0$. The system \eqref{PE-system} is locally well-posed in the analytic class with probability one. Furthermore, given any $\varepsilon>0$ and deterministic analytic initial data, the local solution can be made global with probability at least $1-\varepsilon$ by taking sufficiently large damping intensity $\nu$.
    \end{itemize}
\end{theorem}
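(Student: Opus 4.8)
\emph{Proof proposal.} The plan is to handle both regimes through one device: a Doss--Sussmann change of variables that removes the multiplicative noise and turns \eqref{PE-system} into a random PDE with path-independent structure, reducing everything to a pathwise deterministic analysis in the Gevrey scale plus a probabilistic estimate on the driver. For $s\in(0,1]$, set $V=e^{\nu|\nabla|^sW_t}U$; Itô's formula cancels the martingale term $\nu|\nabla|^sV\,dW_t$ and its correction produces \eqref{e.070201}, which I write schematically as
\begin{equation*}
\partial_t U+\tfrac12\nu^2|\nabla|^{2s}U+e^{-\nu|\nabla|^sW_t}\Big[(e^{\nu|\nabla|^sW_t}U)\cdot\nabla_h(e^{\nu|\nabla|^sW_t}U)+w\,\partial_z(e^{\nu|\nabla|^sW_t}U)+\nabla_h p\Big]=0,
\end{equation*}
so that $U$ solves an equation carrying a genuine fractional dissipation of order $2s$ with \emph{deterministic} coefficient $\tfrac12\nu^2$, all randomness being confined to the conjugators $e^{\pm\nu|\nabla|^sW_t}$ multiplying the nonlinearity. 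For $s=0$ the multiplier is scalar: $V=M_tU$ with $M_t=e^{\nu W_t-\frac12\nu^2t}$, and the same cancellation leaves the inviscid hydrostatic system with its entire nonlinearity weighted by the positive random factor $M_t$ and no dissipation.

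For the random-diffusion statement I would fix a Brownian path and run Gevrey energy estimates for \eqref{e.070201} in $\G_{\eta(t)}^{\sigma,s}$ with a possibly time-dependent radius, the novelty being the favorable term supplied by the $\tfrac12\nu^2|\nabla|^{2s}$ dissipation. Differentiating the weighted norm, the estimate must absorb two effects: the one-derivative loss from the nonlocal term $w\partial_zV$ with $w=-\int_0^z\nabla_h\cdot V$, and the high-frequency amplification by the conjugators. Because the dissipation has order $2s$ whereas the Gevrey weight $e^{\eta|\nabla|^{1/\sigma}}$ and the conjugators $e^{\pm\nu W_t|\nabla|^s}$ carry the strictly smaller orders $1/\sigma$ and $s$, the parabolic smoothing dominates at high frequencies; making this quantitative through interpolation and the anisotropic product estimates for the hydrostatic nonlinearity in three dimensions is exactly where the constraints $s>\tfrac45$ and $\sigma\in(\tfrac8{5s},2)$ are consumed, and it is the technical crux of the whole argument. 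With the a priori estimate in hand, almost-sure local well-posedness follows from a pathwise contraction and continuation scheme, valid off a $\mathbb{P}$-null set.

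For the global extension with high probability, the structural mechanism is that increasing $\nu$ strengthens the order-$\nu^2$ dissipative damping of the Gevrey energy while the conjugators cost nothing extra: morally, the combined factor $e^{\nu W_t|k|^s-\frac12\nu^2t|k|^{2s}}$ has supremum over $k$ bounded by $e^{W_t^2/(2t)}$, \emph{independently} of $\nu$. Thus, once the initial surplus radius $\alpha=\alpha(\varepsilon)$ is chosen large enough to absorb the conjugation into the Gevrey weight on the relevant time window and to pin the running radius at level $\eta$, a Gronwall--bootstrap argument shows that for $\nu$ large the energy stays bounded for all time on an event on which the Brownian path does not fluctuate atypically; the probability of that event exceeds $1-\varepsilon$. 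This yields a global solution in $\G_{\eta}^{\sigma,s}$ with probability at least $1-\varepsilon$.

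For the random-damping statement ($s=0$) the analysis is entirely pathwise in the analytic class. One runs the analytic (Cauchy--Kovalevskaya-type) energy method for the inviscid hydrostatic system while tracking an analyticity radius $\tau(t)$ whose evolution now reads $\dot\tau\gtrsim-M_t\|U\|_{\tau}$, every occurrence of the nonlinearity carrying the factor $M_t$. As long as $\tau(t)>0$ the solution persists with $\|U\|_{\tau(t)}$ comparable to its initial value, so blowup can occur only if the radius reaches zero, which forces $\int_0^TM_t\,dt$ to be large. The decisive step is then probabilistic: the Brownian scaling $u=\nu^2t$ gives $\int_0^\infty M_t\,dt\overset{d}{=}\nu^{-2}\int_0^\infty e^{B_u-u/2}\,du$, and since the exponential functional on the right is almost surely finite, the left-hand side tends to $0$ in probability as $\nu\to\infty$. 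Hence for $\nu$ large, $\mathbb{P}\!\big(\int_0^\infty M_t\,dt<\tau(0)/C\big)\ge1-\varepsilon$, on which event $\tau$ never vanishes and the solution is global. I expect the diffusion nonlinear estimate---balancing the order-$2s$ dissipation against the conjugators and the derivative loss in $w\partial_zV$---to be the principal obstacle, whereas the damping half rests on the clean scaling identity above.
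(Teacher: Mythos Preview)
Your overall architecture---Doss--Sussmann transformation to a random PDE, pathwise deterministic analysis, then a probabilistic estimate on the driver---matches the paper. But the diffusion half contains a genuine error and a different (less robust) probabilistic mechanism, while the damping half takes a legitimately different route.

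\textbf{Diffusion.} The Gevrey class $\G_\eta^{\sigma,s}$ carries the weight $e^{\eta|\nabla|^{s}}$, not $e^{\eta|\nabla|^{1/\sigma}}$; the parameter $\sigma$ enters only through the Sobolev factor $|\nabla|^{\sigma s}$. So your heuristic that the dissipation of order $2s$ beats the ``strictly smaller orders $1/\sigma$ and $s$'' of weight and conjugator misreads the setup: weight and conjugator have the \emph{same} order $s$. The paper's mechanism is instead to run the analysis with an \emph{increasing} radius $\phi(t)=\alpha+\beta t$ (with $\beta<\nu^2/2$) on the event $\Omega_{\alpha,\beta,\nu}=\{\alpha+\beta t\ge\nu W_t\ \forall t\}$, so that the conjugator $e^{\nu W_t|k|^s}$ is absorbed exactly into the Gevrey weight $e^{\phi(t)|k|^s}$. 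The constraints $\sigma s>8/5$, $\sigma<2$ come from a concrete Young--H\"older splitting of the bilinear Fourier sum (the hydrostatic factor $|j'|/|j_3|$ is summed separately in $j'$ and $j_3$) inside a contraction-mapping argument on the mild formulation---not from comparing pseudodifferential orders. For the global part, an energy estimate shows $\|U\|_{\G_\phi^{\sigma,s}}$ is nonincreasing once the data is small relative to $\nu^2-2\beta$; choosing $\beta=\nu^2/4$, $\nu$ large, and $\alpha=-4\ln\varepsilon$ gives $\mathbb P(\Omega_{\alpha,\beta,\nu})\ge1-\varepsilon$. Your alternative observation $\sup_k e^{\nu W_t|k|^s-\frac12\nu^2 t|k|^{2s}}=e^{W_t^2/(2t)}$ is correct and $\nu$-independent, but $W_t^2/(2t)$ is a.s.\ unbounded in $t$ (by the law of the iterated logarithm), so it is unclear how it alone would close a global-in-time bound; the paper's event-based control is what actually does the work.

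\textbf{Damping.} Here you take a genuinely different and valid route. Your transformation $V=M_tU$ with $M_t=e^{\nu W_t-\frac12\nu^2 t}$ leaves $\partial_tU+M_t\,\mathcal{P}Q(U,U)=0$ with no damping, and the scaling identity $\int_0^\infty M_t\,dt\overset{d}{=}\nu^{-2}\int_0^\infty e^{B_u-u/2}\,du$ (an a.s.\ finite exponential functional) gives the global result directly. The paper instead transforms with $e^{-\nu W_t}$, retaining an explicit linear damping $-\frac12\nu^2 U$ and a nonlinearity weighted by $e^{\nu W_t}$; it again restricts to $\Omega_{\alpha,\beta,\nu}$ and chooses an explicit decreasing analytic radius that remains strictly positive. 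Your argument buys a clean one-line probabilistic input; the paper's buys explicit quantitative control of the radius on a concrete high-probability event.
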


The primitive equations have been widely studied in the literature. For instance, the PE is globally well-posed in 3D when the system has full viscosity \cite{cao2007global,kobelkov2006existence,kukavica2007regularity,hieber2016global} in the deterministic setting. For the stochastic PE with full viscosity, local and global well-posedness were examined in \cite{glatt2008stochastic,glatt2011pathwise,brzezniak2021well,debussche2011local,debussche2012global,agresti2022stochastic-1,agresti2022stochastic-2}, and the existence and regularity of invariant measures were established in \cite{glatt2014existence}. On the other hand, the deterministic inviscid PE are ill-posed in Sobolev spaces and Gevrey classes of order strictly greater than 1 \cite{renardy2009ill,han2016ill,ibrahim2021finite}, and some smooth solutions can form singularities in finite time \cite{cao2015finite,wong2015blowup,ibrahim2021finite,collot2023stable}. To obtain local well-posedness, one needs to assume certain special structures (local Rayleigh condition) on the initial data in $2D$ \cite{brenier1999homogeneous,brenier2003remarks,grenier1999derivation,masmoudi2012h}, or real analyticity for general initial data in both $2D$ and $3D$ \cite{ghoul2022effect,kukavica2011local,kukavica2014local}. In the stochastic setting, the local well-posedness of the inviscid PE under the same initial assumptions was established in \cite{hu2023local,hu2023pathwise}, where the authors considered general multiplicative noise but did not explore the benefits of certain types of noise. 

In \cite{buckmaster2020surface, rosenzweig2023global}, the authors considered adding random diffusion $F(V)=\nu|\nabla|^s V$ to the inviscid surface quasi-geostrophic equation and a large class of active scalar equations (such as Patlak–Keller–Segel model), and showed that these systems are globally well-posed in the Gevrey class of order $\frac1s$ with high probability. Inspired by these works, we investigate the effect of random diffusion in the inviscid PE and establish that the system not only overcomes the ill-posedness in the Gevrey class of order $\frac1s$, but also prevents the blowup with high probability, in contrast to the blowup results in the deterministic case \cite{cao2015finite,wong2015blowup,ibrahim2021finite,collot2023stable}. The special nonlinear structure of the PE, which involves a loss of horizontal derivative of $V$,  distinguishes it from those models considered in the previous works \cite{buckmaster2020surface, rosenzweig2023global}.  To our best knowledge, this is the first study to show that random diffusion can tame an ill-posed system, resulting in local well-posedness and global solutions with high probability. 

In \cite{glatt2014local}, the authors considered the $3D$ stochastic Euler equations with random damping (linear multiplicative noise) $F(V)=\nu V$ and proved that regular solutions exist globally with high probability $1-\varepsilon$ when the initial condition meets certain size requirements depending on $\nu$ and $\varepsilon$. In light of recent breakthroughs on the blowup in the $3D$ deterministic Euler equations \cite{elgindi2021finite,chen2022stable}, 
this demonstrates that random damping can help prevent the formation of finite-time singularities. This motivates us to consider random damping for the inviscid PE and prove the global existence of analytic solutions with high probability, provided that the damping intensity $\nu$ is large enough.

The rest of the paper is organized as follows. In Section \ref{sec:pre}, we introduce the notations, functional settings, and some preliminaries for the paper. Sections \ref{sec:diffusion} and \ref{sec:damping} are devoted to the study of the inviscid PE subject to random diffusion and random damping, respectively. 
Finally, we conclude in Section \ref{sec:conclusion}.

\section{Preliminaries}\label{sec:pre}

In this section, we introduce notations and some needed preliminary results. The universal constant $C$ appearing below may change from line to line. When needed, we use subscripts to indicate the dependence of the constant on certain parameters, e.g., we write $C_\sigma$ to emphasize that the constant depends on $\sigma$.

Let $x:= (x',z) = (x_1, x_2, z)\in \mathbb{T}^3$, where $x'$ and $z$ represent the horizontal and vertical variables, respectively. Define
$
    \|f\|_{L^2} := \left(\int_{\mathbb{T}^3} |f(x)|^2 dx\right)^{\frac{1}{2}},
$
associated with the inner product
$  \langle f,g\rangle = \int_{\mathbb{T}^3} f(x)g(x) dx
$
for $f,g \in L^2(\mathbb{T}^3)$. For a function $f \in L^2(\mathbb{T}^3)$, let $\hat{f}_{k}$ be its Fourier coefficient such that
\begin{equation*}
    f(x) = \sum\limits_{k\in 2\pi\mathbb{Z}^3} \hat{f}_{k} e^{ ik\cdot x}, \qquad \hat{f}_{k} = \int_{\mathbb{T}^3} e^{- ik\cdot x} f(x) dx.
\end{equation*}
The $L^2$ norm of $f$ can also be represented as 
\[
   \|f\|_{L^2} = \Big(\sum\limits_{k\in 2\pi \mathbb Z^3} |\hat f_k|^2\Big)^{1/2}.
\]
For $s> 0$ we define the following Sobolev $H^s$ norm and $\dot{H}^s$ semi-norm
\begin{eqnarray*}
&&\hskip-.1in
\|f \|_{H^s}:= \Big(\sum\limits_{k\in 2\pi\mathbb{Z}^3} (1+|k|^{2s}) |\hat{f}_{k}|^2  \Big)^{\frac{1}{2}}, \qquad \|f \|_{\dot{H}^s}:= \Big(\sum\limits_{k\in  2\pi\mathbb{Z}^3} |k|^{2s} |\hat{f}_{k}|^2  \Big)^{\frac{1}{2}},
\end{eqnarray*}
and we refer to \cite{adams2003sobolev} for more details about Sobolev spaces. When $f$ has zero mean, i.e., $\hat f_0=0$, these two norms are equivalent, and $\|f\|_{H^s} \leq \sqrt{2}\|f\|_{\dot H^s}$.

Let $A=|\nabla|$ be the Fourier multiplier such that $\widehat{(Af)}_k = |k|\hat f_k$. For $s\in(0,1]$, $\sigma>0$, and a positive function $\phi(t)$, we define the Gevrey class with order $\frac1s\geq 1$ and radius $\phi(t)>0$ as
\begin{align}\label{e.070801}
    \mathcal{G}_{\phi}^{\sigma,s}: = \{f\in H^{\sigma s}: \|f\|_{\G_{\phi}^{\sigma,s}}<\infty\},
\end{align}
where the Gevrey norm is given by 
\[
\|f\|_{\G_{\phi}^{\sigma,s}} := \left(\|e^{\phi A^{s}}f\|_{\dot H^{\sigma s}}^2 + \|f\|_{L^2}^2 \right)^{\frac12}= \Big( \sum\limits_{k\in  2\pi\mathbb{Z}^3} (1+ e^{2\phi(t)|k|^s} |k|^{2\sigma s}) |\hat{f}_{k}|^2\Big)^{1/2}.
\]
The norm above is equivalent to the homogeneous Gevrey norm 
\[
\|f\|_{\dot\G_{\phi}^{\sigma,s}} := \|e^{\phi A^{s}}f\|_{\dot H^{\sigma s}}= \Big( \sum\limits_{k\in  2\pi\mathbb{Z}^3} e^{2\phi(t)|k|^s} |k|^{2\sigma s} |\hat{f}_{k}|^2\Big)^{1/2},
\]
for $f$ having zero mean,
and it holds that $\|f\|_{\G_{\phi}^{\sigma,s}} \leq \sqrt2 \|f\|_{\dot\G_{\phi}^{\sigma,s}}$.
Note that we often write $\phi$ instead of $\phi(t)$ for simplicity. When $s=1$, the Gevrey class coincides with the space of analytic functions with radius $\phi$. 

For a given function $f$, we define its barotropic part $\bar{f}$ and baroclinic part $\tilde f$ as 
\begin{align*}
    \bar{f}(x') = \int_0^1 f(x',z) dz, \quad \tilde f(x) = f(x) - \bar{f}(x').
\end{align*}
Let $\mathcal P_h$ be the $2D$ Leray projection such that $\mathcal P_h \overline \varphi = \overline{\varphi} - \nabla_h  \Delta_h^{-1} \nabla_h \cdot \overline{\varphi}$. Here $ \Delta_h^{-1} $ represents the inverse of the Laplacian operator on $ \mathbb T^2 $. Furthermore, let $\mathcal P$ be the hydrostatic Leray projection such that $\mathcal P \varphi = \mathcal P_h \overline \varphi + \tilde \varphi$. As $e^{\phi A^s}$ and $A^s$ are Fourier multipliers, these operators commute with $\mathcal P$. Moreover, $\mathcal P$ is a bounded operator in $L^2$, and consequently, one has
\begin{equation}\label{Leray:bdd}
    \|\mathcal P f\|_{\G^{\sigma,s}_\phi} \leq \|f\|_{\G^{\sigma,s}_\phi}, \quad \|\mathcal P f\|_{\dot \G^{\sigma,s}_\phi} \leq \|f\|_{\dot \G^{\sigma,s}_\phi}.
\end{equation}

From the divergence free condition \eqref{PE-3} and the boundary condition \eqref{BC-T3}, the vertical velocity $w$ can be represented through $V$ as
\begin{equation*}
    w(V) = -\int_0^z \nabla_h \cdot V (x',\tilde z) d\tilde z.
\end{equation*}
In addition, we define the bilinear form $Q(U,V)$ as
\begin{equation*}
    Q(U,V) = U\cdot\nabla_h V + w(U)\partial_z V.
\end{equation*}
Using these notations and applying $\mathcal P$ to \eqref{PE-1}, system \eqref{PE-system} can be rewritten as
\begin{equation}\label{PE-abstract}
    dV + \mathcal{P}Q(V,V)dt  = \mathcal P F(V) dW_t, \qquad V(0)=V_0. 
\end{equation}

Since the noise has the form $F(V)=\nu |\nabla|^s V$, integrating \eqref{PE-abstract} over $\mathbb T^3$ yields $\int_{\mathbb T^3} V(x,t) dx =0$ for all $t\geq0$, provided that $\int_{\mathbb T^3} V_0 dx =0$. Additionally, noting that $\overline{V}$ satisfies $\nabla_h \cdot \overline{V}=0$, we search solutions $V$ in $H$ for all $t\geq 0$, where
\begin{align}
    H:= \left\{f\in L^2: \int_{\mathbb T^3} f(x,t) dx =0, \,\, \nabla_h\cdot \bar{f}=0 \right\}.
\end{align}
For later use, we also define $C_T\G_{\phi}^{\sigma,s}$ for any given positive time $T>0$:
\[C_T\G_{\phi}^{\sigma,s}: =\left\{f\in C([0,T];H\cap \G_{\phi}^{\sigma,s}): \|f\|_{C_{T}\G_{\phi}^{\sigma,s}} := \sup\limits_{t\in[0,T]} \|f(t)\|_{\G_{\phi(t)}^{\sigma,s}}<\infty\right\}.\]

To solve \eqref{PE-abstract}, we introduce the multiplier $\Gamma(t) = e^{-\nu W_t |\nabla|^s}$ as in \cite{buckmaster2020surface, rosenzweig2023global,glatt2014local} and define $U=\Gamma V$. By applying It\^o's formula to $U=\Gamma V$ and using system \eqref{PE-abstract}, we can derive the following random PDE for $U$: 
\begin{align}\label{e.070201}
    dU+\mathcal{P}\Gamma Q(\Gamma^{-1}U,\Gamma^{-1}U)dt= -\frac12 \nu^2|\nabla|^{2s}Udt, \qquad U(0)= U_0 = V_0. 
\end{align}
In the sequel, we will focus on this random equation and use its solution to construct the solution to \eqref{PE-abstract}. Specifically, once we obtain solutions $U$ to 
\eqref{e.070201}, $V=\Gamma^{-1}U$ will give the solutions to \eqref{PE-abstract}. 

Regarding the stochastic setting, we consider 
the canonical realization of $(\Omega,\mathcal{F},\mathbb{P})$ where $\Omega=\{\omega\in C([0,\infty);\mathbb{R}):\omega(0)=0\}$
is the canonical Wiener space endowed with the compact-open topology, $\mathcal{F}$ is the Borel $\sigma$-algebra, and $\mathbb{P}$ is the Wiener measure. The Brownian motion is the coordinate process $W_t(\omega)=\omega(t)$ so that each sample path is continuous in time. For global solutions, we will work on the following good subset of $\Omega$:
\begin{align}\label{def:samples}
    \Omega_{\alpha,\beta,\nu} = \{\omega\in\Omega: \alpha+\beta t-\nu W_t\geq 0 \text{ for all } t\geq 0\}. 
\end{align}
For $\alpha,\beta>0$, this set has probability $\mathbb{P}(\Omega_{\alpha,\beta,\nu})\geq 1-e^{-\frac{\alpha\beta}{\nu^2}}$ (cf. \cite{resnick2013adventures}). When applying this 
estimate below, we will be in a situation where $\beta$ has the same order as $\nu^2$, thus to make the probability high, we need sufficiently large $\alpha$. See also \cite{buckmaster2020surface,rosenzweig2023global} for relevant discussions.

\section{Random diffusion}\label{sec:diffusion}
In this section, we analyze the stochastic inviscid PE subject to random diffusion, where $F(V)=\nu|\nabla|^sV$ in system \eqref{PE-abstract}. This is a special type of multiplicative noise defined through the Fourier multiplier, first considered in \cite{buckmaster2020surface}. The equation then reads 
\begin{align}\label{eq.L061001}
    dV + \mathcal{P}Q(V,V)  dt = \nu|\nabla|^sVdW_t, \qquad V(0)=V_0,
\end{align}
where the differentiability index $0<s\leq 1$ and the diffusivity $\nu>0$ are constants. 

Recall the definition of $\Gamma(t) = e^{-\nu W_t|\nabla|^s}$ and $U=\Gamma V$. It\^o's formula gives that 
\[d\Gamma = -\nu|\nabla|^s\Gamma dW_t + \frac{1}{2}\nu^2|\nabla|^{2s}\Gamma dt,\]
and $U$ solves the random PDE:
\begin{align}\label{eq.L061002}
    \partial_t U+\mathcal{P}\Gamma Q(\Gamma^{-1}U,\Gamma^{-1}U)= -\frac12 \nu^2|\nabla|^{2s}U, \qquad U(0)= U_0 = V_0,
\end{align}
provided that $V$ is a solution to \eqref{eq.L061001}, where we have used $\Gamma(0)=1$ to obtain $U_0 = V_0$. As mentioned above, we shall primarily work with this random PDE and use its solution $U$ to construct the solution $V$ to \eqref{eq.L061001} via $V = \Gamma^{-1}U$.

In the rest of this section, we set $\phi(t) = \alpha+\beta t$ as the Gevrey radius, where  $\alpha>0$ and  $0<\beta<\frac{\nu^2}{2}$.

\subsection{Local existence}
We first provide the local existence of solutions to \eqref{eq.L061002}. Compared to \cite{buckmaster2020surface, rosenzweig2023global}, the special nonlinear structure of the PE requires a delicate analysis to obtain the optimal range of parameters $\sigma$ and $s$.

\begin{proposition}\label{p.061701}
      Let $s\in(\frac45,1]$ and $\sigma\in(\frac8{5s},2)$. For any initial data $U_0\in H\cap \G_{\alpha}^{\sigma,s}$, there exists a random time $T>0$,  such that there is a unique local solution to \eqref{eq.L061002} in $C_T\G_{\phi}^{\sigma,s}$ for every $\omega\in \Omega$. Moreover, the random existence time $T$ becomes deterministic over $\Omega_{\alpha,\beta,\nu}$ and depends only on the size of the initial data. 
\end{proposition}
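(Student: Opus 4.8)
The plan is to solve \eqref{eq.L061002} pathwise: for each fixed $\omega$ the process $W_t(\omega)$ is a prescribed continuous function, so $\Gamma(t)=e^{-\nu W_t|\nabla|^s}$ is a given time-dependent Fourier multiplier and \eqref{eq.L061002} becomes a deterministic equation whose principal linear part is the dissipation $-\frac12\nu^2|\nabla|^{2s}U$. I would construct the solution by a Galerkin scheme (truncating to Fourier modes $|k|\le N$), derive uniform-in-$N$ bounds from an a priori Gevrey energy estimate, and pass to the limit by compactness, with measurability in $\omega$ inherited from the approximations. Since we work in $H$, the homogeneous and inhomogeneous Gevrey norms are equivalent ($\|f\|_{\G^{\sigma,s}_\phi}\le\sqrt2\|f\|_{\dot\G^{\sigma,s}_\phi}$), so it suffices to control $\|U\|_{\dot\G^{\sigma,s}_\phi}$.

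The core is the estimate for $\frac{d}{dt}\|U\|_{\dot\G^{\sigma,s}_\phi}^2$. Differentiating in the Fourier representation, the growth of the radius $\phi(t)=\alpha+\beta t$ produces $2\beta\sum_k|k|^s e^{2\phi|k|^s}|k|^{2\sigma s}|\hat U_k|^2$, while the dissipation contributes $-\nu^2\sum_k|k|^{2s}e^{2\phi|k|^s}|k|^{2\sigma s}|\hat U_k|^2$. Combining these gives the symbol $\beta|k|^s-\frac12\nu^2|k|^{2s}$; because $|k|\ge2\pi$ on zero-mean functions and $\beta<\frac{\nu^2}{2}$, this symbol is bounded above by $-(\frac{\nu^2}{2}-\beta)|k|^{2s}$, so the linear part is genuinely dissipative and yields $-(\frac{\nu^2}{2}-\beta)\|U\|_{\dot\G^{\sigma+1,s}_\phi}^2$. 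Thus the condition $0<\beta<\frac{\nu^2}{2}$ is exactly what turns the competition between radius growth and diffusion into a net gain of one order-$s$ derivative, with no interpolation correction needed.

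The decisive step, and the main obstacle, is bounding the nonlinear contribution $\mathcal N=-2\langle e^{2\phi A^s}A^{2\sigma s}U,\ \mathcal P\Gamma Q(\Gamma^{-1}U,\Gamma^{-1}U)\rangle$ against this dissipation. Since $e^{\phi A^s}$, $\Gamma$, and $\mathcal P$ commute, the outer weight becomes $e^{(\phi-\nu W_t)A^s}$ acting on a product whose factors each carry $\Gamma^{-1}U=e^{\nu W_t A^s}U$; using the subadditivity $|j+k|^s\le|j|^s+|k|^s$ (valid for $s\le1$) together with $\phi-\nu W_t\ge0$, the weight splits so that each factor is effectively measured at radius $\phi$, i.e. the stochastic multipliers cancel at leading order. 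It then remains to prove a Gevrey product estimate for $Q(V,V)=V\cdot\nabla_h V+w(V)\partial_z V$ that accommodates the loss of one horizontal derivative, most delicately in $w(V)\partial_z V$ where $w(V)=-\int_0^z\nabla_h\cdot V$ costs a derivative, by placing the top derivative on one factor measured in $\dot\G^{\sigma+1,s}_\phi$ and absorbing it via Young's inequality into the dissipation, while the remaining factor is controlled through Sobolev embedding. The barotropic/baroclinic splitting and the $2D$ Leray projection must be tracked here. The constraints $s>\frac45$ and $\sigma\in(\frac8{5s},2)$ arise precisely from balancing this estimate: $\sigma s>\frac85$ supplies enough base regularity for the requisite $L^\infty$ control in $3D$, while $\sigma<2$ keeps the top derivative within reach of the $\sigma+1$ dissipation.

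Combining the two preceding steps gives a differential inequality of the form $\frac{d}{dt}\|U\|_{\dot\G^{\sigma,s}_\phi}^2\le -c\|U\|_{\dot\G^{\sigma+1,s}_\phi}^2+C\|U\|_{\dot\G^{\sigma,s}_\phi}^{4}$, and dropping the dissipation yields $y'\le Cy^2$ for $y=\|U\|_{\dot\G^{\sigma,s}_\phi}^2$, hence a local existence time $T\sim(C\|U_0\|^2_{\G^{\sigma,s}_\alpha})^{-1}$ together with uniform Galerkin bounds on $[0,T]$. I would then pass to the limit to obtain a solution in $C_T\G^{\sigma,s}_\phi$, prove uniqueness by running the same estimate on the difference $U_1-U_2$ in a slightly weaker norm and invoking Grönwall, and recover continuity in time from the equation. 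Finally, for a general $\omega$ the estimate closes only after restricting to a random time on which $|W_t|$ is small enough to keep $\phi-\nu W_t\ge0$, giving a random $T$; on the good set $\Omega_{\alpha,\beta,\nu}$ the inequality $\alpha+\beta t-\nu W_t\ge0$ holds for all $t$, so the weight splitting is valid uniformly and $T$ can be taken deterministic, depending only on $\|U_0\|_{\G^{\sigma,s}_\alpha}$.
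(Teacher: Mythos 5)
Your framework (pathwise interpretation, the stopping time tied to $\phi(t)-\nu W_t\ge 0$, the weight splitting via $|k|^s\le|j|^s+|k-j|^s$, determinism of $T$ on $\Omega_{\alpha,\beta,\nu}$) matches the paper, but the decisive step of your plan fails. You claim the nonlinear pairing can be estimated so that the energy inequality closes as
\begin{equation*}
\frac{d}{dt}\|U\|^2_{\dot\G^{\sigma,s}_\phi}\le -c\|U\|^2_{\dot\G^{\sigma+1,s}_\phi}+C\|U\|^4_{\dot\G^{\sigma,s}_\phi},
\end{equation*}
i.e.\ with the dissipation norm appearing only \emph{linearly} in the nonlinear bound, so that Young's inequality can absorb it. This is precisely what the loss of a horizontal derivative in $Q$ forbids. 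In the pairing $\langle e^{\phi A^s}B(U,U),e^{\phi A^s}A^{2\sigma s}U\rangle$, with $B(U,U)=\Gamma Q(\Gamma^{-1}U,\Gamma^{-1}U)$, the term coming from $w(\cdot)\partial_z$ contributes (schematically) $\frac{j'\cdot\hat U_j}{j_3}(k_3-j_3)\hat U_{k-j}$, so one extra derivative sits inside the convolution in addition to the $2\sigma s$ derivatives from the test function. The best bound with this structure — it is the one the paper proves in Proposition \ref{p.061702} — is
\begin{equation*}
|\langle e^{\phi A^s}B(U,U),e^{\phi A^s}A^{2\sigma s}U\rangle|\le C\|U\|_{\dot\G^{\sigma,s}_\phi}\|U\|^2_{\dot\G^{\sigma+1,s}_\phi},
\end{equation*}
with the top norm appearing \emph{quadratically}. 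Such a term can be absorbed into $-(\tfrac{\nu^2}{2}-\beta)\|U\|^2_{\dot\G^{\sigma+1,s}_\phi}$ only under the smallness condition $C\|U\|_{\dot\G^{\sigma,s}_\phi}<\tfrac{\nu^2}{2}-\beta$; for large data the coefficient has the wrong sign and the inequality gives no control, since $\|U\|_{\dot\G^{\sigma+1,s}_\phi}$ is not dominated by $\|U\|_{\dot\G^{\sigma,s}_\phi}$. This is exactly why the paper uses the energy method only for the \emph{global, small-data} statement (Propositions \ref{p.061702} and \ref{thm:main}), and it is consistent with the Sobolev/Gevrey ill-posedness of the inviscid PE: an estimate linear in the top norm would essentially remove the derivative loss.

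The paper's local proof for \emph{arbitrary} data is instead a contraction-mapping argument on the Duhamel formulation
$\Phi(U)(t)=e^{-\frac{\nu^2t}{2}A^{2s}}U_0-\int_0^te^{-\frac{\nu^2(t-r)}{2}A^{2s}}\mathcal{P}B(U,U)\,dr$,
where the lost derivatives are paid for by parabolic smoothing of the semigroup: since $\beta<\tfrac{\nu^2}{2}$,
\begin{equation*}
|k|^{2\sigma s}e^{2(\beta|k|^s-\frac{\nu^2}{2}|k|^{2s})(t-r)}\le C(t-r)^{-\sigma},
\end{equation*}
so the nonlinearity costs a time singularity $(t-r)^{-\sigma/2}$ rather than a derivative, and the same Young-convolution/H\"older analysis you envision (with $\tfrac1p+\tfrac1q=\tfrac32$, yielding the threshold $\sigma s>\tfrac85$) closes the fixed-point estimate with a factor $T^{1-\sigma/2}$. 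This is also where $\sigma<2$ genuinely enters — it makes $\int_0^t(t-r)^{-\sigma/2}dr=Ct^{1-\sigma/2}$ finite — not, as you suggest, by ``keeping the top derivative within reach of the $\sigma+1$ dissipation.'' The smallness then comes from $T$, not from $U_0$, which is what makes the local result large-data. As written, your Galerkin/energy scheme cannot produce Proposition \ref{p.061701}; it would need either the paper's mild-formulation smoothing argument or a shrinking-radius mechanism (as in the random damping section), which is incompatible with the increasing radius $\phi=\alpha+\beta t$ required here.
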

\begin{proof}
    For any fixed $\omega\in \Omega$, define 
    \[T_{\omega} :=\inf\left\{t\geq 0: \nu W_t(\omega)>\phi(t)\right\}.\]
    Note that $T_{\omega}>0$ by the continuity of the Brownian path $W_t(\omega)$ and the fact that $W_0(\omega)=0$ and $\phi(0)=\alpha>0$. Denote by \begin{align}\label{e.L061101}
        B(U,U) = \Gamma Q(\Gamma^{-1}U,\Gamma^{-1}U).
    \end{align}
    Since $\Gamma$ commutes with $\mathcal P$, from \eqref{eq.L061002} we define the solution operator 
    \begin{align*}
        \Phi(U)(t):= e^{-\frac{\nu^2t}{2}A^{2s}}U_0 - \int_0^t e^{-\frac{\nu^2(t-r)}{2}A^{2s}} \mathcal{P}B(U,U) dr
    \end{align*} 
    for $t\in[0,T_{\omega}]$. Note that since both $U_0$ and $B(U,U)$ have zero means, so does $\Phi(U)$.
    
    We want to estimate the Gevrey norm of $\Phi(U)$. 
    For any $j=(j_1,j_2,j_3)\in \mathbb Z^3$ we denote by $j'=(j_1,j_2)$ and similar notations apply to $k = (k_1, k_2, k_3)$. By our assumption $\beta<\frac{\nu^2}{2}$ and the property \eqref{Leray:bdd} of the hydrostatic Leray projection, we have
    \begin{align*}
        \|\Phi(U)(t)\|_{\G^{\sigma,s}_{\phi(t)}} \leq & \sqrt{2} \|\Phi(U)(t)\|_{\dot\G^{\sigma,s}_{\phi(t)}} = \sqrt{2}
        \|e^{\phi(t) A^s}\Phi(U)(t)\|_{\dot H^{\sigma s}}
        \\
        \leq &\sqrt2\|e^{\phi(t) A^s-\frac{\nu^2t}{2}A^{2s}}U_0\|_{\dot H^{\sigma s}} + \sqrt{2}\int_0^t\left\|e^{\phi(t)A^s-\frac{\nu^2}{2}(t-r)A^{2s}}B(U,U)\right\|_{\dot H^{\sigma s}}dr \\
        \leq &\sqrt{2}\|e^{\alpha A^s}U_0\|_{\dot H^{\sigma s}} + \sqrt{2}\int_0^t\left\|e^{\phi(t)A^s-\frac{\nu^2}{2}(t-r)A^{2s}}B(U,U)\right\|_{\dot H^{\sigma s}}dr.
    \end{align*}
    We next estimate the nonlinear term. A straightforward computation yields 
    \begin{align}\label{e.L061102}
    \begin{split}
        \widehat{B(U,U)}(t,k)&= e^{-\nu W_t|k|^s}\sum_{j}e^{\nu W_t|j|^s}\hat{U}_j\cdot i(k'-j')e^{\nu W_t|k-j|^s}\hat{U}_{k-j}\\
        &\quad -e^{-\nu W_t|k|^s}\sum_{j}e^{\nu W_t|j|^s}\frac{j'\cdot\hat{U}_j}{j_3}i(k_3-j_3)e^{\nu W_t|k-j|^s}\hat{U}_{k-j}\\
        &:=\hat{B}_1(t,k)+\hat{B}_2(t,k). 
    \end{split}
    \end{align}
    
    For the term $\hat{B}_2(t,k)$, using the facts that $\phi(t)-\nu W_t(\omega)\geq 0$ for $t\in[0,T_{\omega}]$, and $|k|^s\leq |k-j|^s+|j|^s$ when $0<s\leq 1$, one has
    \begin{align*}
        &\sum_{k}\left||k|^{\sigma s}e^{\phi(t)|k|^s-\frac{\nu^2}{2}(t-r)|k|^{2s}}\hat{B}_2(r, k)\right|^2\\
        & = \sum_{k}\left||k|^{\sigma s}e^{(\phi(t)-\phi(r))|k|^s-\frac{\nu^2}{2}(t-r)|k|^{2s}}\sum_{j}e^{(\phi(r)-\nu W_r)(|k|^s-|j|^s-|k-j|^s)}e^{\phi(r)|j|^s}\frac{j'\cdot\hat{U}_j}{j_3}(k_3-j_3)e^{\phi(r)|k-j|^s}\hat{U}_{k-j}\right|^2\\
        &\leq \sum_{k}\left||k|^{\sigma s}e^{(\beta |k|^s-\frac{\nu^2}{2}|k|^{2s})(t-r)}\sum_{j}e^{(\phi(r)-\nu W_r)(|k|^s-|j|^s-|k-j|^s)}e^{\phi(r)|j|^s}\frac{j'\cdot\hat{U}_j}{j_3}(k_3-j_3)e^{\phi(r)|k-j|^s}\hat{U}_{k-j}\right|^2\\
        &\leq \sum_{k}|k|^{2\sigma s}e^{2(\beta |k|^s-\frac{\nu^2}{2}|k|^{2s})(t-r)}\Bigg(\sum_{j}\left|e^{\phi(r)|j|^s}\frac{j'\cdot\hat{U}_j}{j_3}(k_3-j_3)e^{\phi(r)|k-j|^s}\hat{U}_{k-j}\right|\Bigg)^2.
    \end{align*}
    Since $\beta<\frac{\nu^2}{2}$, there exist a constant $C>0$ such that  
    $|k|^{2\sigma s}e^{2(\beta |k|^s-\frac{\nu^2}{2}|k|^{2s})(t-r)}\leq C(t-r)^{-\sigma}$.
    By Young's inequality for convolution with $\frac{1}{p}+\frac{1}{q}=\frac32$ and $1< p,q <2$, one has
    \begin{align*}
        &\Bigg(\sum_{k}\Bigg(\sum_{j}\left|e^{\phi(r)|j|^s}\frac{j'\cdot\hat{U}_j}{j_3}(k_3-j_3)e^{\phi(r)|k-j|^s}\hat{U}_{k-j}\right|\Bigg)^2\Bigg)^{\frac12}\\
        &\leq \Bigg(\sum_{j}\left|e^{\phi(r)|j|^s}\frac{|j'|}{|j_3|}\hat{U}_j\right|^p\Bigg)^{1/p}\Bigg(\sum_{j}\left||j|e^{\phi(r)|j|^s}\hat{U}_{j}\right|^q\Bigg)^{1/q}.
    \end{align*}
    Using the H\"older inequality, the first term in the above inequality is bounded by
    \begin{align}\label{e.061701}
    \begin{split}
       &\Bigg(\sum_{j}\left|e^{\phi(r)|j|^s}\frac{|j'|}{|j_3|}\hat{U}_j\right|^p\Bigg)^{1/p} = \Bigg(\sum_{j_3\neq 0} \sum_{j'\neq 0}\left|e^{\phi(r)|j|^s}\frac{|j'|}{|j_3|}\hat{U}_j\right|^p\Bigg)^{1/p}
       \\
       \leq & \Bigg(\sum_{j_3\neq 0} \Bigg(\sum_{j'\neq 0}\left|e^{p\phi(r)|j|^s}|j|^{p\sigma s}|\hat{U}_j|^p\right|^{\frac2p}\Bigg)^{p/2} \Bigg(\sum_{j'\neq 0} \left| |j_3|^{-p} |j|^{p-p\sigma s} \right|^{\frac{2}{2-p}}  \Bigg)^{\frac{2-p}2}  \Bigg)^{1/p}
       \\
       \leq & \Bigg(\sum_{j_3\neq 0} \Bigg(\sum_{j'\neq 0}e^{2\phi(r)|j|^s}|j|^{2\sigma s}|\hat{U}_j|^2\Bigg)^{p/2} |j_3|^{-p} \Bigg(\sum_{j'\neq 0} |j|^{\frac{2p(1-\sigma s)}{2-p}}  \Bigg)^{\frac{2-p}2}  \Bigg)^{1/p}
       \\
       \leq & C \Bigg(\sum_{j_3\neq 0} \Bigg(\sum_{j'\neq 0}e^{2\phi(r)|j|^s}|j|^{2\sigma s}|\hat{U}_j|^2\Bigg)^{p/2} |j_3|^{-p}   \Bigg)^{1/p}
       \\
       \leq & C \Bigg(\sum_{j_3\neq 0}  \sum_{j'\neq 0}e^{2\phi(r)|j|^s}|j|^{2\sigma s}|\hat{U}_j|^2\Bigg)^{1/2} \Bigg(\sum_{j_3\neq 0}  |j_3|^{-\frac{2p}{2-p}} \Bigg)^{\frac{2-p}{2p}} \leq C \|e^{\phi(r)A^s}U\|_{\dot H^{\sigma s}},
    \end{split}
    \end{align}
    where in the third inequality we need the assumption that
    \begin{equation}\label{condition:p}
        \frac{2p}{2-p}(\sigma s-1) >2,
    \end{equation}
    and we have used the fact that $|j|^{-\frac{2p}{2-p}(\sigma s-1)} \leq |j'|^{-\frac{2p}{2-p}(\sigma s-1)}$, for $j_3\neq 0$.
    Similarly, for the second term we deduce
    \begin{align}\label{eq.Q062901}
    \left(\sum_{j}\left||j|e^{\phi(r)|j|^s}\hat{U}_{j}\right|^q\right)^{1/q} \leq &\left(\sum_{j}e^{2\phi(r)|j|^s}|j|^{2\sigma s}|\hat{U}_j|^2\right)^{1/2}\left(\sum_{j}|j|^{\frac{2q}{2-q}(1-\sigma s)}\right)^{\frac{2-q}{2q}}\notag
         \\
         \leq &C\|e^{\phi(r)A^s}U\|_{\dot H^{\sigma s}},
    \end{align}
    as long as assuming
    \begin{align}\label{eq.L061103}
        \frac{2q}{2-q}(\sigma s-1)>3.
    \end{align}
    
    To ensure that both \eqref{condition:p} and \eqref{eq.L061103} hold, it is equivalent to guarantee that
     $\sigma s-1> f(p,q)$, where 
    $f(p,q)=\max\left\{\frac{2-p}{p},\frac{3(2-q)}{2q}\right\}$.
    Since $1/p+1/q=3/2$, we consider the function
    \[g(p)=f(p,1/(3/2-1/p))=\max\left\{\frac2p-1,3(1-\frac1p)\right\},\]
    which has a minimum value of $3/5$ at $p=5/4$.
    Thus, as long as $\sigma s>8/5$, both \eqref{condition:p} and \eqref{eq.L061103} will hold. 
    Therefore, we have 
    \begin{align*}
        \left(\sum_{k}\left||k|^{\sigma s}e^{\phi(t)|k|^s-\frac{\nu^2}{2}(t-r)|k|^{2s}}\hat{B}_2(r, k)\right|^2\right)^{\frac12}\leq C\frac{\|e^{\phi(r)A^s}U\|_{\dot H^{\sigma s}}^2}{(t-r)^{\frac{\sigma}{2}}}. 
    \end{align*}
    
    Similarly, the same bound holds for the corresponding term involving $\hat{B}_1$, with the condition \eqref{condition:p} replaced by $\frac{2p}{2-p}\sigma s >3$, which automatically holds as long as \eqref{condition:p} holds since $1< p < 2$. 
    
    Therefore we have 
    \begin{align*}
        \int_0^t\|e^{\phi(t)A^s-\frac{\nu^2}{2}(t-r)A^{2s}}B(U,U)\|_{\dot H^{\sigma s}}dr
        &\leq C\int_0^t\frac{\|e^{\phi(r)A^s}U\|_{\dot H^{\sigma s}}^2}{(t-r)^{\frac{\sigma}{2}}} dr\\
        & \leq C\int_0^t\frac{1}{(t-r)^{\frac{\sigma}{2}}} dr\sup_{r\in[0,t]}\|e^{\phi(r)A^s}U\|_{\dot H^{\sigma s}}^2\\
        &\leq Ct^{1-\frac{\sigma}{2}}\sup_{r\in[0,t]}\|e^{\phi(r)A^s}U\|_{\dot H^{\sigma s}}^2,
    \end{align*}
    which requires $\sigma<2$. Thus, the valid ranges of $\sigma$ and $s$ are $s\in(\frac45,1]$ and $\sigma\in(\frac8{5s},2)$.
    Consequently, for any $T\in[0,T_{\omega}]$, we obtain 
    \[
    \|\Phi(U)\|_{C_T\G_{\phi}^{\sigma, s}}\leq \sqrt{2}\|e^{\alpha A^s}U_0\|_{\dot H^{\sigma s}}+ \sqrt{2}CT^{1-\frac{\sigma}{2}}\|U\|_{C_T\G_{\phi}^{\sigma, s}}^2. 
    \]
    
    Choose a radius $R\geq2\sqrt2\|e^{\alpha A^s}U_0\|_{\dot H^{\sigma s}}$ and denote by $B_R(0)$ the ball in $C_T\G_{\phi}^{\sigma, s}$ centered at the origin. Then for  $0<T\leq T_{\omega}$ satisfying $\sqrt2CT^{1-\frac{\sigma}{2}}R  \leq \frac1{2}$, the operator $\Phi$ maps $B_R(0)$ to itself.  
    Since the nonlinear term $B(U,U)$ is bilinear, we can use similar estimates as above to obtain
    \begin{align*}
        \|\Phi(U)-\Phi(V)\|_{C_T\G_{\phi}^{\sigma, s}}&\leq  CT^{1-\frac{\sigma}{2}}(\|U\|_{C_T\G_{\phi}^{\sigma, s}}+\|V\|_{C_T\G_{\phi}^{\sigma, s}})\|U-V\|_{C_T\G_{\phi}^{\sigma, s}}
        \\
        &\leq 2CT^{1-\frac{\sigma}{2}}R\|U-V\|_{C_T\G_{\phi}^{\sigma, s}}
    \end{align*}
    for all $U,V\in B_R(0)$ with $U(0)=V(0)$. As long as $T$ is small enough such that $2CT^{1-\frac{\sigma}{2}}R\leq \frac12$, $\Phi$ is a contraction on $B_R(0)$, and its unique fixed point corresponds to the unique local solution of \eqref{eq.L061002} for the sample $\omega$. The local solution of \eqref{eq.L061002} starting from $U_0$ is then obtained for every sample $\omega$ in $\Omega$ by the arbitrariness of $\omega\in \Omega$. 

Recall the definition of $\Omega_{\alpha,\beta,\nu}$ from \eqref{def:samples}. On $\Omega_{\alpha,\beta,\nu}$, one has $\phi(t)-\nu W_t\geq 0$ for all $t\geq 0$. Thus $T_\omega=\infty$ on $\Omega_{\alpha,\beta,\nu}$. Consequently, the random existence time $T$ can be made uniform for all $\omega\in \Omega_{\alpha,\beta,\nu}$, which means that the existence time $T$ becomes deterministic over $\Omega_{\alpha,\beta,\nu}$. 
\end{proof}

Transforming back to the original system \eqref{eq.L061001} using $V = \Gamma^{-1}U$, we obtain the local well-posedness in the Gevrey class of order $\frac1s$ for the stochastic inviscid PE subject to random diffusion. The result is summarized as follows. 
\begin{corollary}\label{thm:local}
    Let $\eta>0$, $s\in(\frac45,1]$ and $\sigma\in(\frac8{5s},2)$. For any $V_0\in H\cap \G_{\alpha+\eta}^{\sigma,s}$, with probability one, there exists a random time $T>0$ such that \eqref{eq.L061001} has a unique solution $V\in C_T\G_{\eta}^{\sigma,s}$. 
\end{corollary}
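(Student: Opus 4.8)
The plan is to transfer the result of Proposition~\ref{p.061701} for the random PDE \eqref{eq.L061002} back to the original stochastic equation \eqref{eq.L061001} through the substitution $V=\Gamma^{-1}U$, where $\Gamma^{-1}(t)=e^{\nu W_t A^s}$. Since $U_0=V_0\in H\cap\G_{\alpha+\eta}^{\sigma,s}$, I would first apply Proposition~\ref{p.061701} with initial radius $\alpha+\eta$, i.e.\ with $\phi(t)=(\alpha+\eta)+\beta t$ for a fixed $\beta\in(0,\nu^2/2)$. This yields, for every $\omega\in\Omega$, a unique local solution $U\in C_{T_\omega}\G_\phi^{\sigma,s}$ on $[0,T_\omega]$ with $T_\omega=\inf\{t\ge 0:\nu W_t>\phi(t)\}>0$.

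Next I would verify that $V:=\Gamma^{-1}U$ solves \eqref{eq.L061001}, which is the reverse of the computation leading to \eqref{eq.L061002}. Applying It\^o's formula to $\Gamma^{-1}$ gives $d\Gamma^{-1}=\nu A^s\Gamma^{-1}dW_t+\tfrac12\nu^2 A^{2s}\Gamma^{-1}dt$; since $U$ solves a random PDE with no martingale part, the quadratic covariation term in $d(\Gamma^{-1}U)$ vanishes, the two $\tfrac12\nu^2 A^{2s}V\,dt$ contributions (one from $d\Gamma^{-1}\cdot U$, one from $\Gamma^{-1}\,dU$) cancel, and, using that $\Gamma^{-1}$ is a Fourier multiplier commuting with $\mathcal P$ so that $\Gamma^{-1}\mathcal P\Gamma=\mathcal P$, one recovers $dV+\mathcal P Q(V,V)dt=\nu|\nabla|^s V\,dW_t$, which is \eqref{eq.L061001}. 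Because $\Gamma^{-1}$ fixes the zero mode and preserves the divergence-free structure, it maps $H$ into $H$, so $V(t)\in H$.

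The key quantitative step is the norm transfer. Writing $\|V(t)\|_{\dot\G_\eta^{\sigma,s}}=\|e^{(\eta+\nu W_t)A^s}U(t)\|_{\dot H^{\sigma s}}=\|U(t)\|_{\dot\G_{\eta+\nu W_t}^{\sigma,s}}$, I need the effective radius $\eta+\nu W_t$ to stay below the radius $\phi(t)=\alpha+\eta+\beta t$ controlling $U$, which holds precisely when $\nu W_t\le\alpha+\beta t$. I would therefore set $T:=\inf\{t\ge 0:\nu W_t>\alpha+\beta t\}$, which is strictly positive for every $\omega$ (since $W_0=0<\alpha/\nu$) and satisfies $T\le T_\omega$, so $U$ exists on $[0,T]$. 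There, monotonicity of the Gevrey norm in the radius gives $\|V(t)\|_{\dot\G_\eta^{\sigma,s}}\le\|U(t)\|_{\dot\G_{\phi(t)}^{\sigma,s}}$, and the $L^2$ part is handled analogously: for $k\neq 0$ the factor $|k|^{2\sigma s}$ is bounded below, so $e^{2\nu W_t|k|^s}\le C\,e^{2\phi(t)|k|^s}|k|^{2\sigma s}$, whence $\|V(t)\|_{\G_\eta^{\sigma,s}}\le C\|U(t)\|_{\G_{\phi(t)}^{\sigma,s}}$ uniformly on $[0,T]$.

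Finally I would establish continuity $t\mapsto V(t)\in\G_\eta^{\sigma,s}$ by splitting $V(t)-V(t_0)=e^{(\eta+\nu W_t)A^s}(U(t)-U(t_0))+(e^{(\eta+\nu W_t)A^s}-e^{(\eta+\nu W_{t_0})A^s})U(t_0)$ and combining the continuity of $U$ in $C_T\G_\phi^{\sigma,s}$ with that of $t\mapsto W_t$ and the uniform radius bound; uniqueness of $V$ then follows from uniqueness of $U$ in Proposition~\ref{p.061701} and the invertibility of $\Gamma$. Since $T>0$ for every $\omega$, this delivers the claimed local solution with probability one. I expect the main obstacle to be the bookkeeping of Gevrey radii: because $\Gamma^{-1}=e^{\nu W_t A^s}$ is an unbounded operator, one must keep $\eta+\nu W_t$ within the analyticity budget $\phi(t)$, and it is exactly this constraint that forces the initial radius for $U$ to be $\alpha+\eta$ (hence the hypothesis $V_0\in\G_{\alpha+\eta}^{\sigma,s}$) and dictates the choice of the stopping time $T$.
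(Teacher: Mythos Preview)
Your proposal is correct and follows essentially the same route as the paper: apply Proposition~\ref{p.061701} with initial radius $\alpha+\eta$, then use the constraint $\nu W_t\le\alpha+\beta t$ on a short random interval to dominate $\|V(t)\|_{\G_\eta^{\sigma,s}}$ by $\|U(t)\|_{\G_{\phi}^{\sigma,s}}$ via $V=\Gamma^{-1}U$. One small slip to fix: Proposition~\ref{p.061701} does not produce a solution on all of $[0,T_\omega]$ but only on some $[0,T']$ with $T'\le T_\omega$ (the contraction argument also limits $T'$ through the initial data size), so your final existence time should be $\min\bigl\{T',\,\inf\{t:\nu W_t>\alpha+\beta t\}\bigr\}$, which is still positive for every $\omega$. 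The paper also handles the $L^2$ part more directly via the zero-mean inequality $\|V\|_{\G_\eta^{\sigma,s}}\le\sqrt2\|V\|_{\dot\G_\eta^{\sigma,s}}$, avoiding your mode-by-mode argument.
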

\begin{proof}
    Recall that $U(t)=e^{-\nu W_t A^s} V(t)$. Since $U_0=V_0$, we have $U_0\in \G_{\alpha+\eta}^{\sigma,s}$. By Proposition \ref{p.061701} we know that with probability one, the solution $U$ to equation \eqref{eq.L061002} exists in $C_T\G_{\phi+\eta}^{\sigma,s}$ up to a random time $T$. Since $\nu W_t-\phi(t)\leq 0$ for all $t\in[0,T]$, we have
    \[\|V(t)\|_{\G_{\eta}^{\sigma,s}}\leq \sqrt2\|e^{\eta A^s}e^{\nu W_t A^s}U(t)\|_{\dot H^{\sigma s}}= \sqrt2\|e^{(\eta+\phi(t)) A^s}e^{(\nu W_t-\phi(t)) A^s}U\|_{\dot H^{\sigma s}}\leq \sqrt2\|U\|_{\G_{\phi+\eta}^{\sigma,s}}.\]
    Therefore, $V\in C_T\G_{\eta}^{\sigma,s}$. 
\end{proof}

\subsection{Global existence}
We next investigate the global solutions to equation \eqref{eq.L061002} on the sample set $\Omega_{\alpha,\beta,\nu}$.  To this end, we first derive  energy estimates that allows us to extend the local solutions. 
\begin{proposition}\label{p.061702}
    Let $s\in(\frac45,1]$ and $\sigma\in(\frac8{5s},2)$. For $T>0$, assume that system \eqref{eq.L061002} has a unique solution $U\in C_T\G_{\phi}^{\sigma+1,s}$ for all sample paths in $\Omega_{\alpha,\beta,\nu}$. Then $U$ satisfies
    \begin{align*}
        \frac{d}{dt}\|U\|_{\G_\phi^{\sigma,s}}^2+(\nu^2-2\beta-C^*\|U\|_{\G_\phi^{\sigma,s}})\|U\|_{\dot\G_\phi^{\sigma+1,s}}^2\leq 0,
    \end{align*}
    where the constant $C^*>0$ depends only on $\sigma$ and $s$. In particular, if $\|U_0\|_{\G_{\phi(0)}^{\sigma,s}}\leq \frac{\nu^2-2\beta}{C^*}$, then $\|U(t)\|_{\G_{\phi(t)}^{\sigma,s}}$ is decreasing in time $t$ and
    $
        \|U(t)\|_{\G_{\phi(t)}^{\sigma,s}} \leq \|U_0\|_{\G_{\phi(0)}^{\sigma,s}}
    $
    for all $t\in[0,T]$.
\end{proposition}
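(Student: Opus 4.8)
The plan is to test equation \eqref{eq.L061002} against a suitably weighted copy of $U$ and to absorb the nonlinear contribution into the diffusion, crucially using that we work on $\Omega_{\alpha,\beta,\nu}$, where $\phi(t)-\nu W_t\geq 0$. Writing $\|U\|_{\G_{\phi}^{\sigma,s}}^2=\|U\|_{\dot\G_{\phi}^{\sigma,s}}^2+\|U\|_{L^2}^2$ and differentiating in time (legitimate because the hypothesis $U\in C_T\G_{\phi}^{\sigma+1,s}$ renders every term below finite), I would isolate three contributions. First, the time derivative of the weight $e^{2\phi(t)A^{s}}$ produces $2\beta\,\|U\|_{\dot\G_{\phi}^{\sigma+\frac12,s}}^2$, which I bound by $2\beta\,\|U\|_{\dot\G_{\phi}^{\sigma+1,s}}^2$ since every nonzero frequency has $|k|\geq 2\pi>1$, so that $|k|^{(2\sigma+1)s}\leq |k|^{(2\sigma+2)s}$. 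Second, the diffusion $-\tfrac12\nu^2 A^{2s}U$ contributes exactly $-\nu^2\|U\|_{\dot\G_{\phi}^{\sigma+1,s}}^2$ to the homogeneous part and a nonpositive $-\nu^2\|U\|_{\dot H^{s}}^2$ to the $L^2$ part, which I discard. Third, the nonlinear term, which is the heart of the matter.

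For the nonlinear pairing I would split the derivatives and the exponential weight symmetrically and apply Cauchy--Schwarz together with \eqref{Leray:bdd} to obtain
\[
\left|\langle A^{2\sigma s}e^{2\phi A^s}U,\ \mathcal P B(U,U)\rangle\right|\ \leq\ \|U\|_{\dot\G_{\phi}^{\sigma+1,s}}\,\|B(U,U)\|_{\dot\G_{\phi}^{\sigma-1,s}},
\]
with $B$ as in \eqref{e.L061101}, thereby reducing matters to the bilinear bound $\|B(U,U)\|_{\dot\G_{\phi}^{\sigma-1,s}}\leq C^{*}\|U\|_{\G_{\phi}^{\sigma,s}}\|U\|_{\dot\G_{\phi}^{\sigma+1,s}}$. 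To prove this I would reuse the Fourier decomposition $\widehat{B(U,U)}=\hat B_1+\hat B_2$ from \eqref{e.L061102}: on $\Omega_{\alpha,\beta,\nu}$ the inequalities $\phi(t)-\nu W_t\geq 0$ and $|k|^s\leq|j|^s+|k-j|^s$ let me replace each $e^{\nu W_t|\cdot|^{s}}\hat U_{\cdot}$ by $g:=e^{\phi A^{s}}U$ at the cost of a factor $\leq 1$, yielding $|e^{\phi|k|^s}\hat B_2(k)|\leq\sum_j\tfrac{|j'|}{|j_3|}|\hat g_j|\,|k-j|\,|\hat g_{k-j}|$, and an analogous bound for $\hat B_1$ without the anisotropic factor.

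It then remains to close a trilinear estimate for $g$. Distributing the output weight through $|k|^{(\sigma-1)s}\lesssim|j|^{(\sigma-1)s}+|k-j|^{(\sigma-1)s}$, I would apply Young's convolution inequality with $\tfrac1p+\tfrac1q=\tfrac32$ followed by H\"older, exactly in the manner of \eqref{e.061701}--\eqref{eq.L061103}, being careful to keep the anisotropic factor $\tfrac{|j'|}{|j_3|}$ on the low-regularity factor and the extra derivative $|k-j|$ on the high-regularity factor so that every tail sum converges. Convergence of sums such as $\sum_{j_3}|j_3|^{-2p/(2-p)}$, $\sum_{j'}|j'|^{(1-\sigma s)\,2p/(2-p)}$ and $\sum_{m}|m|^{(1-2s)\,2q/(2-q)}$ forces precisely the conditions $\sigma s>\tfrac85$ and $s>\tfrac45$ already imposed in Proposition \ref{p.061701} (optimally $p=\tfrac54$, $q=\tfrac{10}{7}$); the $\hat B_1$ term is handled identically and is strictly easier. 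The leftover $L^2$ nonlinear term is crude: $|\langle U,\mathcal P B\rangle|\leq\|U\|_{L^2}\|B\|_{L^2}\leq\|U\|_{\G_{\phi}^{\sigma,s}}\|B\|_{\dot\G_{\phi}^{\sigma-1,s}}$ using $\sigma>1$ and $|k|\geq 1$, which is again $\leq C^{*}\|U\|_{\G_{\phi}^{\sigma,s}}^2\|U\|_{\dot\G_{\phi}^{\sigma+1,s}}$; since $\|U\|_{\G_{\phi}^{\sigma,s}}\leq\sqrt2\,\|U\|_{\dot\G_{\phi}^{\sigma+1,s}}$ (once more because $|k|\geq 1$), it is absorbed into $C^{*}\|U\|_{\G_{\phi}^{\sigma,s}}\|U\|_{\dot\G_{\phi}^{\sigma+1,s}}^2$.

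Collecting the three contributions yields the stated differential inequality. For the final assertion I would run a continuity argument: the hypothesis $\|U_0\|_{\G_{\phi(0)}^{\sigma,s}}\leq(\nu^2-2\beta)/C^{*}$ makes the coefficient $\nu^2-2\beta-C^{*}\|U\|_{\G_{\phi}^{\sigma,s}}$ nonnegative at $t=0$, so $\tfrac{d}{dt}\|U\|_{\G_{\phi}^{\sigma,s}}^2\leq 0$ there; since $\|U\|_{\G_{\phi}^{\sigma,s}}$ then cannot increase, the coefficient stays nonnegative and the inequality propagates over $[0,T]$, giving monotone decay and the bound $\|U(t)\|_{\G_{\phi(t)}^{\sigma,s}}\leq\|U_0\|_{\G_{\phi(0)}^{\sigma,s}}$. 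I expect the main obstacle to be the nonlinear estimate, and specifically the control of the anisotropic factor $\tfrac{|j'|}{|j_3|}$ arising from the vertical velocity $w(U)$: this is the loss-of-horizontal-derivative feature peculiar to the primitive equations, and getting the derivative bookkeeping right (deciding which factor carries the surplus derivative) is exactly what pins down the admissible ranges of $\sigma$ and $s$.
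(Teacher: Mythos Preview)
Your approach is essentially the paper's: test in $\G_\phi^{\sigma,s}$, use Cauchy--Schwarz to reduce the nonlinear pairing to a bilinear bound on $\|B(U,U)\|_{\dot\G_\phi^{\sigma-1,s}}$, distribute $|k|^{(\sigma-1)s}\lesssim |j|^{(\sigma-1)s}+|k-j|^{(\sigma-1)s}$, then apply Young's convolution inequality and H\"older. The continuity argument for monotone decay is also the same. Two remarks are worth making.

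First, your stated allocation rule (``anisotropic factor $\tfrac{|j'|}{|j_3|}$ on the low-regularity factor, the extra $|k-j|$ on the high-regularity factor'') only handles one of the two terms after distributing $|k|^{(\sigma-1)s}$: it works when the weight lands on $k-j$ (your tail sums $\sum_{j'}|j'|^{(1-\sigma s)\frac{2p}{2-p}}$ and $\sum_m|m|^{(1-2s)\frac{2q}{2-q}}$ correspond to this case), but fails when it lands on $j$. In that case the $j$-factor carries $|j|^{(\sigma-1)s}\tfrac{|j'|}{|j_3|}$; sending it to $\dot H^{\sigma s}$ leaves a residual $|j|^{1-s}$ in the $j'$-tail, which diverges for every $s\leq 1$. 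The paper reverses the allocation there: the anisotropic factor together with $|j|^{(\sigma-1)s}$ goes to the \emph{high}-regularity norm $\dot H^{(\sigma+1)s}$ (residual $|j|^{1-2s}$, summable under $(2s-1)\tfrac{2p}{2-p}>2$), while the bare $|k-j|$ goes to $\dot H^{\sigma s}$ (condition $(\sigma s-1)\tfrac{2q}{2-q}>3$). At $p=\tfrac54$, $q=\tfrac{10}{7}$ both sets of conditions reduce to $\sigma s>\tfrac85$ and $s>\tfrac45$, so your parameter conclusion is unchanged once the allocation is corrected.

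Second, for the $L^2$ pairing the paper invokes the cancellation $\langle B(U,U),U\rangle=0$ directly, whereas you bound it crudely via $\|B\|_{\dot\G_\phi^{\sigma-1,s}}$ and reabsorb. Your route is certainly valid and perhaps more robust, since the twisted cancellation $\langle \Gamma Q(\Gamma^{-1}U,\Gamma^{-1}U),U\rangle=0$ is not the standard divergence-free identity; either way the result is the same up to the constant $C^*$.
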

\begin{proof}
    From equation \eqref{eq.L061002} and notation \eqref{e.L061101}, one has
    \begin{align*}
        \frac12 \frac{d}{dt} \|U\|_{L^2}^2 = - \frac{\nu^2}{2} \|A^s U\|_{L^2}^2
    \end{align*}
    due to the cancellation $\langle B(U,U), U\rangle=0$, and
    \begin{align}
        \frac12 \frac{d}{dt}\|e^{\phi(t)A^s}U\|_{\dot H^{\sigma s}}^2 = &\beta\|e^{\phi(t)A^s} A^{\frac s2}U\|_{\dot H^{\sigma s}}^2 - \frac{\nu^2}{2}\|e^{\phi(t)A^s} A^s U\|_{\dot H^{\sigma s}}^2 \notag
        \\
        &-\langle e^{\phi(t)A^s}B(U,U),e^{\phi(t)A^s}A^{2\sigma s}U\rangle.
    \end{align}
    Combining the above two equations together gives
    \begin{align}\label{e.061703}
        \frac12 \frac{d}{dt}\|U\|_{\G_{\phi}^{\sigma,s}}^2 = \beta\|U\|_{\dot\G_{\phi}^{\sigma+\frac12,s}}^2 - \frac{\nu^2}{2}\|A^s U\|_{\G_{\phi}^{\sigma,s}}^2 
        -\langle e^{\phi(t)A^s}B(U,U),e^{\phi(t)A^s}A^{2\sigma s}U\rangle.
    \end{align}
    In view of \eqref{e.L061102}, one has 
    \begin{align}\label{e.061702}
            &|\langle e^{\phi(t)A^s}B(U,U),e^{\phi(t)A^s}A^{2\sigma s}U\rangle|\\
            &\leq \sum_k\left|e^{\phi(t)|k|^s}\hat{B}_1(t,k)e^{\phi(t)|k|^s}|k|^{2\sigma s}\overline{\hat{U}}_k\right|+\sum_k\left|e^{\phi(t)|k|^s}\hat{B}_2(t,k)e^{\phi(t)|k|^s}|k|^{2\sigma s}\overline{\hat{U}}_k\right|:=B_1+B_2. \notag
    \end{align}
    Using the fact that  $\phi(t)-\nu W_t\geq 0$ for all $t\geq 0$ and $\omega \in \Omega_{\alpha,\beta,\nu}$, and $|k|^s\leq |k-j|^s+|j|^s$ for $0<s\leq 1$, one deduces, by the Cauchy–Schwarz inequality, that  
    \begin{align*}
        \begin{split}
            B_2 &= \sum_k\left|e^{(\phi(t)-\nu W_t)|k|^s}\sum_{j}e^{\nu W_t|j|^s}\frac{j'\cdot\hat{U}_j}{j_3}i(k_3-j_3)e^{\nu W_t|k-j|^s}\hat{U}_{k-j}e^{\phi(t)|k|^s}|k|^{2\sigma s}\overline{\hat{U}}_k\right|\\
            & = \sum_k\left|\sum_{j}e^{(\phi(t)-\nu W_t)(|k|^s-|k-j|^s-|j|^s)}e^{ \phi(t)|j|^s}\frac{j'\cdot\hat{U}_j}{j_3}i(k_3-j_3)e^{\phi(t)|k-j|^s}\hat{U}_{k-j}e^{\phi(t)|k|^s}|k|^{2\sigma s}\overline{\hat{U}}_k\right|\\
            &\leq \sum_k\sum_{j}\left|e^{ \phi(t)|j|^s}|k|^{(\sigma-1)s}\frac{j'\cdot\hat{U}_j}{j_3}i(k_3-j_3)e^{\phi(t)|k-j|^s}\hat{U}_{k-j}e^{\phi(t)|k|^s}|k|^{(\sigma+1)s}\overline{\hat{U}}_k\right|\\
            &\leq \|e^{\phi(t)A^s}U\|_{\dot H^{(\sigma+1)s}}\Bigg(\sum_k\Bigg(\sum_{j}\left|e^{ \phi(t)|j|^s}|k|^{(\sigma-1)s}\frac{j'\cdot\hat{U}_j}{j_3}i(k_3-j_3)e^{\phi(t)|k-j|^s}\hat{U}_{k-j}\right|\Bigg)^2\Bigg)^{\frac12}.
        \end{split}
    \end{align*}
    Using $|k|^{(\sigma-1)s}\leq C( |k-j|^{(\sigma-1)s}+|j|^{(\sigma-1)s})$ and the Minkowski inequality, the last term in the above inequality is controlled by 
    \begin{align*}
        \begin{split}
            &\Bigg(\sum_k\Bigg(\sum_{j}\left|e^{\phi(t)|j|^s}|k|^{(\sigma-1)s}\frac{j'\cdot\hat{U}_j}{j_3}i(k_3-j_3)e^{\phi(t)|k-j|^s}\hat{U}_{k-j}\right|\Bigg)^2\Bigg)^{\frac12}\\
            &\qquad \leq C\Bigg(\sum_k\Bigg(\sum_{j}\left|e^{\phi(t)|j|^s}|j|^{(\sigma-1)s}\frac{j'\cdot\hat{U}_j}{j_3}(k_3-j_3)e^{\phi(t)|k-j|^s}\hat{U}_{k-j}\right|\Bigg)^2\Bigg)^{\frac12}\\
            &\qquad \qquad+C\Bigg(\sum_k\Bigg(\sum_{j}\left|e^{\phi(t)|j|^s}\frac{j'\cdot\hat{U}_j}{j_3}|k-j|^{(\sigma-1)s}(k_3-j_3)e^{\phi(t)|k-j|^s}\hat{U}_{k-j}\right|\Bigg)^2\Bigg)^{\frac12}\\
            &\qquad :=B_{21}+B_{22}.
        \end{split}
    \end{align*}
    We first estimate the term $B_{21}$. By Young's inequality for convolution with $\frac1p+\frac1q=\frac32$ and $1< p,q< 2$, one has
    \begin{align*}
        \begin{split}
            B_{21}&\leq C\Bigg(\sum_{j_3\neq 0}\sum_{j'\neq 0}\left|e^{\phi(t)|j|^s}|j|^{(\sigma-1)s}\frac{|j'|}{|j_3|}\hat{U}_j\right|^p\Bigg)^{1/p}\Bigg(\sum_{j}\left||j|e^{\phi(t)|j|^s}\hat{U}_{j}\right|^q\Bigg)^{1/q}.
        \end{split}
    \end{align*}
Similar to the derivations of estimates \eqref{e.061701} and \eqref{eq.Q062901},  H\"older inequality gives 
\begin{align*}
    &\Bigg(\sum_{j'\neq 0}\left|e^{\phi(t)|j|^s}|j|^{(\sigma-1)s}\frac{|j'|}{|j_3|}\hat{U}_j\right|^p\Bigg)^{1/p}
    \\
     & \qquad \leq   C\Bigg(\sum_{j_3\neq 0} \Bigg(\sum_{j'\neq 0}\left|e^{p\phi(r)|j|^s}|j|^{p(\sigma+1) s}|\hat{U}_j|^p\right|^{\frac2p}\Bigg)^{\frac{p}{2}} \Bigg(\sum_{j'\neq 0} \left| |j_3|^{-p} |j|^{p(1-2s)} \right|^{\frac{2}{2-p}}  \Bigg)^{\frac{2-p}2}  \Bigg)^{\frac1p}
    \\
        & \qquad \leq \Bigg(\sum_{j_3\neq 0} \Bigg(\sum_{j'\neq 0}e^{2\phi(r)|j|^s}|j|^{2(\sigma+1) s}|\hat{U}_j|^2\Bigg)^{p/2} |j_3|^{-p} \Bigg(\sum_{j'\neq 0} |j'|^{\frac{2p(1-2 s)}{2-p}}  \Bigg)^{\frac{2-p}2}  \Bigg)^{1/p}
        \\
        &\qquad \leq C \|e^{\phi(r)A^s}U\|_{\dot H^{(\sigma+1) s}},
\end{align*}
and
 \begin{align*}
     \Bigg(\sum_{j}\left||j|e^{\phi(r)|j|^s}\hat{U}_{j}\right|^q\Bigg)^{1/q} \leq C\|e^{\phi(r)A^s}U\|_{\dot H^{\sigma s}},
 \end{align*}
    provided that
    \begin{align}\label{global-condition-1}
        \frac{2p}{2-p}(2 s-1)>2, \text{ and } \frac{2q}{2-q}(\sigma s-1)>3.
    \end{align}
    The same bound for $B_{22}$ can be obtained similarly, provided there exist $1< p,q<2$ with $\frac1p+\frac1q=\frac32$ such that
    \begin{align}\label{global-condition-2}
        \frac{2p}{2-p}(\sigma s-1)>2, \text{ and }\frac{2q}{2-q}(2s-1)>3.
    \end{align}
    Note that the above two conditions, \eqref{global-condition-1} and \eqref{global-condition-2}, are implied by \eqref{condition:p} and \eqref{eq.L061103} since $\sigma<2$. As a result, the assumptions in Proposition \ref{p.061702} on $\sigma$ and $s$ ensure the existence of such $p,q$, making the above inequalities hold. Thus we obtain 
    \[
    B_2\leq \|e^{\phi(t)A^s}U\|_{\dot H^{(\sigma+1)s}}(B_{21}+B_{22})\leq C\|U\|_{\dot \G_{\phi}^{\sigma,s}}\|U\|_{\dot \G_{\phi}^{\sigma+1,s}}^2,
    \]
    where $C$ depends only on $\sigma$ and $s$.
    The same bound also holds for the term $B_1$ defined in \eqref{e.061702} under the same conditions. Indeed, the estimate for $B_1$ is simpler than that for $B_2$ as there is no loss of horizontal derivative. 

    The proof is then complete by combining the bounds for $B_1$ and $B_2$ with \eqref{e.061703} and \eqref{e.061702}. 
\end{proof}

We are now ready to extend the local solutions globally. Note that the fixed point argument provides solutions in $C_{T}\G_{\phi}^{\sigma,s}$, but we require solutions to be in $C_{T}\G_{\phi}^{\sigma+1}$ when deriving the energy bound. To extend the local solutions, we apply a trick from \cite{buckmaster2020surface}, taking advantage of Gevrey embeddings. 
\begin{proposition}\label{thm:main}
    Let $s\in(\frac45,1]$ and $\sigma\in(\frac{8}{5s},2)$, and fix $\eta>0$. For $\alpha>0$ and $0<\beta<\frac{\nu^2}{2}$, assume that the initial condition $U_0\in H\cap \G_{\alpha+\eta}^{\sigma,s}$ satisfies
    \begin{align}\label{e.062001}
        \left\|U_0\right\|_{\mathcal{G}_{\alpha+\eta}^{\sigma,s}}<\frac1{C^*}\left(\frac{\nu^2}{2}-\beta\right),
    \end{align}
    where $C^*$ is the constant appearing in Proposition \ref{p.061702}. Then, system \eqref{eq.L061002} has a unique global solution in $C\left([0, \infty) ; \mathcal{G}_{\phi+\eta}^{\sigma,s}\right)$ for every sample $\omega \in \Omega_{\alpha, \beta, \nu}$, where $\phi(t)=\alpha+\beta t$. Furthermore, 
    $t\to \|U(t)\|_{\mathcal{G}_{\phi(t)+\eta}^{\sigma,s}}$
    is decreasing for all $t\geq 0$.  
\end{proposition}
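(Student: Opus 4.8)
The plan is to run a continuation argument: Proposition~\ref{p.061701} supplies a local solution, Proposition~\ref{p.061702} supplies an a priori bound, and the two are glued into a global solution on $\Omega_{\alpha,\beta,\nu}$. The only friction is a one-derivative mismatch: the fixed point yields $U\in C_{T}\G_{\phi+\eta}^{\sigma,s}$, whereas the energy inequality is derived for solutions in $C_T\G_{\cdot}^{\sigma+1,s}$. Resolving this gap is the crux of the proof. First I would apply Proposition~\ref{p.061701} with initial radius $\alpha+\eta$ to obtain, for each $\omega\in\Omega_{\alpha,\beta,\nu}$, a unique local solution $U\in C_{T_0}\G_{\phi+\eta}^{\sigma,s}$ on a deterministic interval $[0,T_0]$; note that $\phi(t)+\eta=(\alpha+\eta)+\beta t$ is still affine with the same slope $\beta<\nu^2/2$, hence an admissible Gevrey radius.

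To close the regularity gap I would exploit the Gevrey embedding that converts radius into Sobolev smoothness: for any $\delta\in(0,\eta)$,
\[
\|f\|_{\dot\G_{\phi+\eta-\delta}^{\sigma+1,s}}^2=\sum_{k}e^{2(\phi+\eta-\delta)|k|^s}|k|^{2(\sigma+1)s}|\hat f_k|^2\le \Big(\sup_{k}e^{-2\delta|k|^s}|k|^{2s}\Big)\|f\|_{\dot\G_{\phi+\eta}^{\sigma,s}}^2,
\]
the supremum being finite. Hence $U\in C_{T_0}\G_{\phi+\eta-\delta}^{\sigma+1,s}$, and Proposition~\ref{p.061702} applies with the affine radius $\phi+\eta-\delta$ in place of $\phi$; crucially, its constant $C^*$ depends only on $\sigma,s$ and is therefore the same for every $\delta$. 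Since \eqref{e.062001} gives $\|U_0\|_{\G_{\alpha+\eta-\delta}^{\sigma,s}}\le\|U_0\|_{\G_{\alpha+\eta}^{\sigma,s}}<\frac1{C^*}(\frac{\nu^2}2-\beta)<\frac{\nu^2-2\beta}{C^*}$, the smallness hypothesis holds uniformly in $\delta$, so $t\mapsto\|U(t)\|_{\G_{\phi(t)+\eta-\delta}^{\sigma,s}}$ is nonincreasing for each $\delta\in(0,\eta)$. Letting $\delta\downarrow0$ and applying monotone convergence termwise to the Fourier series (each summand increasing to the corresponding summand of the $\G_{\phi+\eta}^{\sigma,s}$ norm), I obtain that $t\mapsto\|U(t)\|_{\G_{\phi(t)+\eta}^{\sigma,s}}$ is itself nonincreasing, hence bounded by $M:=\|U_0\|_{\G_{\alpha+\eta}^{\sigma,s}}$.

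With this uniform bound the continuation is routine. Inspecting the proof of Proposition~\ref{p.061701}, the local lifespan is determined by $R\simeq\|U_0\|_{\dot\G_{\alpha+\eta}^{\sigma,s}}$ through $\sqrt2\,CT^{1-\sigma/2}R\le\frac12$, with $C=C(\sigma,s)$ and exponent $1-\frac\sigma2>0$, and does not otherwise depend on the value of the radius (all the nonlinear estimates only ever see the increment $\phi(t)-\phi(r)=\beta(t-r)$). Restarting at any $t_0$ from the datum $U(t_0)\in H\cap\G_{\phi(t_0)+\eta}^{\sigma,s}$ therefore produces a solution whose radius grows exactly as $\phi(t)+\eta$ and whose lifespan $\tau\ge\tau(M)>0$ is bounded below uniformly, because $\|U(t_0)\|_{\G_{\phi(t_0)+\eta}^{\sigma,s}}\le M$; on $\Omega_{\alpha,\beta,\nu}$ one has $T_\omega=\infty$, so no restart is obstructed. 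If the maximal existence time $T^*$ were finite, a restart from some $t_0>T^*-\tau(M)$ would, by local uniqueness, extend $U$ past $T^*$, a contradiction; hence $T^*=\infty$. This yields the global solution in $C([0,\infty);\G_{\phi+\eta}^{\sigma,s})$, and the monotonicity from the previous step is precisely the asserted decay of $t\mapsto\|U(t)\|_{\G_{\phi(t)+\eta}^{\sigma,s}}$.

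The main obstacle is the one-derivative regularity gap between local existence and the energy estimate; everything hinges on the radius-for-derivative trade together with the $\delta\downarrow0$ limit, which is exactly why it is essential that $C^*$ be independent of $\delta$ and that the lifespan in Proposition~\ref{p.061701} depend on the size of the data but not on the absolute value of the radius. Once those two features are in place, the argument is a clean iteration.
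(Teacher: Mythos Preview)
Your argument is correct and rests on the same three ingredients as the paper's proof: the Gevrey embedding that trades a fixed amount of radius for one extra $A^s$-derivative (so the local solution in $\G_{\phi+\eta}^{\sigma,s}$ lies in $C_T\G_{\phi+\eta-\delta}^{\sigma+1,s}$, which is exactly the paper's estimate~\eqref{est:cont}), the $\delta$-independence of $C^*$ in Proposition~\ref{p.061702}, and the fact that the lifespan in Proposition~\ref{p.061701} depends only on the size of the data and not on the absolute radius.

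The organization, however, differs. The paper fixes $\eta'<\eta$, introduces a telescoping sequence $\phi_j=\phi_{j-1}-2^{-j}(\eta-\eta')$, and at the $j$-th step extends the solution on $[0,jT]$ in $\G_{\phi_{j-1}}^{\sigma,s}$, then drops to $\G_{\phi_j}^{\sigma+1,s}$ to run the energy estimate; the total radius lost over the infinite induction is $\eta-\eta'$, and only afterwards does the paper let $\eta'\uparrow\eta$ by monotone convergence and then separately re-establish continuity at the full radius. You instead take the monotone limit $\delta\downarrow 0$ \emph{on each local interval}, recovering monotonicity at the full radius $\phi+\eta$ immediately; this gives the uniform bound $\|U(t)\|_{\G_{\phi(t)+\eta}^{\sigma,s}}\le M$ at once and makes the continuation a straightforward restart-with-uniform-lifespan argument. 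Your route is shorter because it avoids both the telescoping bookkeeping and the final re-verification of continuity at radius $\phi+\eta$; the paper's route, on the other hand, makes it slightly more explicit that the solution sits in $C([0,\infty);\G_{\phi+\eta'}^{\sigma+1,s})$ for every $\eta'<\eta$, which is a mild extra regularity statement you do not record.
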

\begin{proof}
    For a given constant $\eta>0$, we fix some $\eta'\in [0,\eta)$. We will first show by induction that the solution exists globally in $C([0,\infty);\G_{\phi+\eta'}^{\sigma,s})$. To this end, let us define  
    \[\phi_0=\phi+\eta \text{ and } \phi_{j}=\phi_{j-1}-2^{-j}(\eta-\eta') \text{ for } j\geq 1.\]
    Since $U_0\in {\mathcal{G}_{\alpha+\eta}^{\sigma,s}}$, by Proposition \ref{p.061701}, there exists a deterministic $T>0$, depending only on $\|U_0\|_{\G_{\alpha+\eta}^{\sigma,s}}$ and $\sigma$, such that system \eqref{eq.L061002} has a unique solution $U\in C\left([0, T]; \mathcal{G}_{\phi_0}^{\sigma,s}\right)$ for every sample  $\omega \in \Omega_{\alpha, \beta, \nu}$. 
    Our goal is to show that for all $j\geq 1$, the unique solution $U$ satisfies 
    \begin{align}\label{eq:induction}
        &U \in C\left([0, jT]; \G_{\phi_j}^{\sigma,s}\right), \qquad  t\to \|U(t)\|_{\G_{\phi_j(t)}^{\sigma,s}} \text{ is decreasing,} \notag\\
        &\hspace{30pt}\text{and }\|U(t)\|_{\G_{\phi_j(t)}^{\sigma,s}} \leq \|U_0\|_{\G_{\alpha+\eta}^{\sigma,s}} \text{ for } t\in[0,jT].
    \end{align}
    
    Let us start with $j=1$.
    Since $\phi_0(t)-\phi_1(t)=\frac12(\eta-\eta')=:\triangle \phi_0>0$ is independent of time $t$, it follows that
    \begin{align}
            &\|e^{\phi_1(t_1) A^s} U(t_1) - e^{\phi_1(t_2)} U(t_2) \|_{\dot H^{(\sigma+1)s}}^2 = \|e^{-\triangle \phi_0 A^s} \left(e^{\phi_0(t_1) A^s} U(t_1) - e^{\phi_0(t_2)} U(t_2)\right) \|_{\dot H^{(\sigma+1)s}}^2 \notag
        \\
        = &\sum_k |k|^{2\sigma s} |k|^{2s} e^{-2\triangle \phi_0 |k|^s}\left| e^{\phi_0(t_1)|k|^s} \hat{U}(t_1,k) -  e^{\phi_0(t_2)|k|^s} \hat{U}(t_2,k) \right|^2 \notag
        \\
        \leq & C_{\triangle \phi_0} \sum_k |k|^{2\sigma s} \left| e^{\phi_0(t_1)|k|^s} \hat{U}(t_1,k) -  e^{\phi_0(t_2)|k|^s} \hat{U}(t_2,k) \right|^2  \notag
        \\
        = &C_{\triangle \phi_0} \|e^{\phi_0(t_1) A^s} U(t_1) - e^{\phi_0(t_2)} U(t_2) \|_{\dot H^{\sigma s}}^2, \label{est:cont}
    \end{align}
    for any $t_1, t_2 \in [0,T]$. Consequently,
    the solution $U\in C\left([0, T]; \mathcal{G}_{\phi_1}^{\sigma+1,s}\right)$. From Proposition \ref{p.061702} and the bound on the initial data \eqref{e.062001}, it follows that $\|U(t)\|_{\mathcal{G}_{\phi_1(t)}^{\sigma,s}}$ is decreasing for $t\in[0,T]$, and 
    \[\|U(t)\|_{\mathcal{G}_{\phi_1(t)}^{\sigma,s}}\leq \|U_0\|_{\mathcal{G}_{\phi_1(0)}^{\sigma,s}}\leq \|U_0\|_{\G_{\alpha+\eta}^{\sigma,s}}.\]
    
    We now iterate this process. Suppose at the $j$-th step we have \eqref{eq:induction} holds. Specifically, it holds that $\|U(jT)\|_{\G_{\phi_j(jT)}^{\sigma,s}} \leq \|U_0\|_{\G_{\alpha+\eta}^{\sigma,s}}$.
    Then, Proposition \ref{p.061701} enables us to extend the solution to $[jT, (j+1)T]$ such that $U\in C\left([0, (j+1)T]; \mathcal{G}_{\phi_j}^{\sigma,s}\right)$. Since $\phi_{j}(t) - \phi_{j+1}(t) =2^{-(j+1)}(\eta-\eta')=:\triangle \phi_j>0$, one can follow the procedure similar to the one used for \eqref{est:cont} to conclude that
    $U\in C\left([0, (j+1)T]; \mathcal{G}_{\phi_{j+1}}^{\sigma+1,s}\right)$. It then follows from Proposition \ref{p.061702} and the initial assumption \eqref{e.062001} that $\|U(t)\|_{\mathcal{G}_{\phi_{j+1}}^{\sigma,s}}$ is monotonically decreasing in $t\in[0,(j+1)T]$ and 
    \[\|U(t)\|_{\mathcal{G}_{\phi_{j+1}(t)}^{\sigma,s}}\leq \|U_0\|_{\mathcal{G}_{\phi_{j+1}(0)}^{\sigma,s}}\leq \|U_0\|_{\G_{\alpha+\eta}^{\sigma,s}},\quad t\in[0,(j+1)T].\]
    This shows that \eqref{eq:induction} holds for the $(j+1)$-th step, thus completing the induction proof of \eqref{eq:induction}.
The construction of $\phi_j$ and the induction process give us the unique solution $U\in C\left([0, \infty); \mathcal{G}_{\phi+\eta'}^{\sigma,s}\right)$ such that 
    $t\to \|U(t)\|_{\mathcal{G}_{\phi(t)+\eta'}^{\sigma,s}}$
    is decreasing for all $t\geq 0$ and $\sup\limits_{t\geq 0} \|U(t)\|_{\mathcal{G}_{\phi(t)+\eta'}^{\sigma,s}} \leq \|U_0\|_{\G_{\alpha+\eta}^{\sigma,s}}$.

    The above argument holds for all $\eta'\in[0,\eta)$, and the sequence of functions $f^{\eta'}(t):= \|U(t)\|_{\mathcal{G}_{\phi+\eta'}^{\sigma,s}}$ is increasing as $\eta' \to \eta$ for each fixed $t\geq 0$, one can apply the monotone convergence theorem to conclude that
    \begin{equation}\label{eq:monotonic}
    \|U(t)\|_{\mathcal{G}_{\phi+\eta}^{\sigma,s}}=\lim_{\eta'\to\eta}\|U(t)\|_{\mathcal{G}_{\phi+\eta'}^{\sigma,s}}\leq \|U_0\|_{\G_{\alpha+\eta}^{\sigma,s}},
    \end{equation}
    which implies that $U\in L^\infty\left(0, \infty; \mathcal{G}_{\phi+\eta}^{\sigma,s}\right)$. To see the continuity in time of $U$ in $\mathcal{G}_{\phi+\eta}^{\sigma,s}$, we first apply Proposition \ref{p.061701} to obtain that the solution $U\in C\left([0, T]; \mathcal{G}_{\phi+\eta}^{\sigma,s}\right)$ since $U_0\in \G_{\alpha+\eta}^{\sigma,s}$. Thanks to \eqref{eq:monotonic}, we know that at the time $t=T$, $\|U(T)\|_{\mathcal{G}_{\phi(T)+\eta}^{\sigma,s}} \leq \|U_0\|_{\G_{\alpha+\eta}^{\sigma,s}}$. As the existence time $T$ depends only on the size of the initial condition and $\phi(t)$ depends linearly on time, we can utilize Proposition \ref{p.061701} again to conclude that $U\in C\left([0, 2T]; \mathcal{G}_{\phi+\eta}^{\sigma,s}\right)$. Repeating this process gives us that $U\in C\left([0, \infty); \mathcal{G}_{\phi+\eta}^{\sigma,s}\right)$. 
    
    Finally, the monotonicity of 
    $t\to \|U(t)\|_{\mathcal{G}_{\phi(t)+\eta}^{\sigma,s}}$
    also follows from the monotone convergence theorem since for $t_1\leq t_2$, 
    \[\|U(t_1)\|_{\mathcal{G}_{\phi(t_1)+\eta}^{\sigma,s}}=\lim_{\eta'\to\eta}\|U(t_1)\|_{\mathcal{G}_{\phi(t_1)+\eta'}^{\sigma,s}}\geq \lim_{\eta'\to\eta}\|U(t_2)\|_{\mathcal{G}_{\phi(t_2)+\eta'}^{\sigma,s}}=\|U(t_2)\|_{\mathcal{G}_{\phi(t_2)+\eta}^{\sigma,s}}.\]
\end{proof}

Finally, we are ready to present the following result regarding the original system \eqref{eq.L061001}: the global well-posedness of $V = \Gamma^{-1}U$ in the Gevrey class of order $\frac1s$. 

\begin{theorem}\label{theorem:global}
   Let $s\in(\frac45,1]$ and $\sigma\in(\frac{8}{5s},2)$, and fix $\eta>0$. For any small $\varepsilon\in (0,1)$, there exists $\alpha=\alpha(\varepsilon)>0$ such that for any initial condition $V_0\in H\cap \G_{\alpha+\eta}^{\sigma,s}$, there exists a unique global solution $V\in C\left([0, \infty) ; \mathcal{G}_{\eta}^{\sigma,s}\right)$ with probability at least $1-\varepsilon$, provided that $\nu$ is sufficiently large.
\end{theorem}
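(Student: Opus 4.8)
The plan is to transfer the global existence for the transformed equation (Proposition~\ref{thm:main}) back to the original system via $V=\Gamma^{-1}U$, and to calibrate the free parameters $\alpha,\beta,\nu$ so that simultaneously (a) the good event $\Omega_{\alpha,\beta,\nu}$ has probability at least $1-\varepsilon$, and (b) the smallness condition \eqref{e.062001} on the initial data is met. The decisive idea is to tie $\beta$ to $\nu^2$: I take $\beta=\tfrac{\nu^2}{4}$, which satisfies $0<\beta<\tfrac{\nu^2}{2}$ and decouples the two requirements.

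With $\beta=\tfrac{\nu^2}{4}$, the probability bound from \eqref{def:samples} becomes $\mathbb{P}(\Omega_{\alpha,\beta,\nu})\geq 1-e^{-\alpha\beta/\nu^2}=1-e^{-\alpha/4}$, which depends on $\alpha$ alone and not on $\nu$. Given $\varepsilon\in(0,1)$, I would therefore fix $\alpha=\alpha(\varepsilon):=4\log(1/\varepsilon)$ (or any larger value) so that $e^{-\alpha/4}\leq\varepsilon$, hence $\mathbb{P}(\Omega_{\alpha,\beta,\nu})\geq 1-\varepsilon$. This choice of $\alpha$ is made once and for all, before the initial data is revealed and independent of $\nu$. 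Next, for any given $V_0\in H\cap\G_{\alpha+\eta}^{\sigma,s}$ the norm $\|V_0\|_{\G_{\alpha+\eta}^{\sigma,s}}$ is a fixed finite number, and condition \eqref{e.062001} reads $\|V_0\|_{\G_{\alpha+\eta}^{\sigma,s}}<\tfrac{1}{C^*}(\tfrac{\nu^2}{2}-\beta)=\tfrac{\nu^2}{4C^*}$; since the right-hand side tends to infinity with $\nu$, it holds for all sufficiently large $\nu$ (depending on $\|V_0\|_{\G_{\alpha+\eta}^{\sigma,s}}$ and $C^*$). Because $\beta$ is kept tied to $\nu$, enlarging $\nu$ does not disturb the probability bound already secured.

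With these parameters in place, Proposition~\ref{thm:main} applies with $U_0=V_0$: for every $\omega\in\Omega_{\alpha,\beta,\nu}$ the equation \eqref{eq.L061002} has a unique global solution $U\in C([0,\infty);\G_{\phi+\eta}^{\sigma,s})$ with $\sup_{t\geq0}\|U(t)\|_{\G_{\phi(t)+\eta}^{\sigma,s}}\leq\|U_0\|_{\G_{\alpha+\eta}^{\sigma,s}}$. I then set $V=\Gamma^{-1}U=e^{\nu W_t A^s}U$ and argue as in Corollary~\ref{thm:local}: on $\Omega_{\alpha,\beta,\nu}$ one has $\nu W_t-\phi(t)\leq 0$ for all $t\geq0$, so $\|V(t)\|_{\G_\eta^{\sigma,s}}\leq\sqrt2\,\|e^{(\eta+\phi(t))A^s}e^{(\nu W_t-\phi(t))A^s}U(t)\|_{\dot H^{\sigma s}}\leq\sqrt2\,\|U(t)\|_{\G_{\phi(t)+\eta}^{\sigma,s}}$, giving a uniform-in-time bound and hence $V\in L^\infty(0,\infty;\G_\eta^{\sigma,s})$. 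For continuity I write $e^{\eta A^s}V(t)=e^{(\nu W_t-\phi(t))A^s}\bigl(e^{(\phi(t)+\eta)A^s}U(t)\bigr)$: the inner factor is continuous in $\dot H^{\sigma s}$ by Proposition~\ref{thm:main}, the outer multiplier is a contraction with continuous nonpositive exponent, and a standard dominated-convergence argument on the Fourier side then yields $V\in C([0,\infty);\G_\eta^{\sigma,s})$. Uniqueness of $V$ transfers from that of $U$ through the bijection $U=\Gamma V$.

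The genuine analytic content — the nonlinear estimates handling the loss of a horizontal derivative, the energy inequality, and the Gevrey-embedding bootstrap — is already contained in Propositions~\ref{p.061701}, \ref{p.061702}, and \ref{thm:main}, so the only real work here is the parameter calibration. The single point demanding care, and the main obstacle, is the order of quantifiers: $\alpha$ must be fixed from $\varepsilon$ before $V_0$ is known, while $\nu$ (and with it $\beta=\tfrac{\nu^2}{4}$) is chosen afterward according to $\|V_0\|_{\G_{\alpha+\eta}^{\sigma,s}}$. Tying $\beta$ to $\nu^2$ is exactly what keeps the probability bound $1-e^{-\alpha/4}$ insensitive to the later enlargement of $\nu$, so the two requirements never conflict.
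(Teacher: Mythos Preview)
Your proposal is correct and follows essentially the same route as the paper: fix $\alpha=4\log(1/\varepsilon)$, set $\beta=\nu^2/4$ so that $\mathbb{P}(\Omega_{\alpha,\beta,\nu})\geq 1-e^{-\alpha/4}=1-\varepsilon$ independently of $\nu$, then enlarge $\nu$ so that \eqref{e.062001} holds, invoke Proposition~\ref{thm:main}, and transfer to $V=\Gamma^{-1}U$ via the argument of Corollary~\ref{thm:local}. Your additional remarks on continuity and the order of quantifiers only make explicit what the paper leaves implicit.
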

\begin{proof}
    For a given $\varepsilon\in(0,1)$, we choose $\alpha=-4\ln \varepsilon$ so that $e^{-\frac\alpha4}=\varepsilon$. For the initial condition $V_0\in H\cap \G_{\alpha+\eta}^{\sigma,s}$, we take $\nu$ large enough such that $\nu^2 > 4C^* \|V_0\|_{\G_{\alpha+\eta}^{\sigma,s}}$ and $\beta=\frac{\nu^2}4$. Since $U_0=V_0$, these choices of $\nu$ and $\beta$ ensures that the assumption \eqref{e.062001} holds. By Proposition~\ref{thm:main}, we know that system \eqref{eq.L061002} has a unique global solution $U\in C\left([0, \infty) ; \mathcal{G}_{\phi+\eta}^{\sigma,s}\right)$ with $\phi(t)=\alpha+\beta t$ for every sample $\omega \in \Omega_{\alpha, \beta, \nu}$. Now, one can define $V$ by $V = \Gamma^{-1}U$ and follow the proof of Corollary \ref{thm:local} to obtain the unique global solution $V\in C\left([0, \infty) ; \mathcal{G}_{\eta}^{\sigma,s}\right)$ for every sample $\omega \in \Omega_{\alpha, \beta, \nu}$, and thus with probability at least $\mathbb{P}(\Omega_{\alpha, \beta, \nu}) \geq 1-e^{-\frac{\alpha\beta}{\nu^2}}=1-\varepsilon$.
\end{proof}

\section{Random damping}\label{sec:damping}
This section is dedicated to the analysis of the stochastic inviscid PE subject to random damping, i.e., the term $F$ in \eqref{PE-abstract} takes the form $F(V)=\nu V$. Hence, the equation reads 
\begin{align}\label{e.071303}
        d V + \mathcal{P}Q(V,V)dt =  \nu V dW, \qquad V(0)=V_0. 
\end{align}
Recall $\Gamma = e^{-\nu W_t |\nabla|^s}$ and $U = \Gamma V$ as defined in Section~\ref{sec:pre}. Since $s =0$ when considering random damping, $\Gamma$ now takes the form $\Gamma(t) = e^{-\nu W_t}$ and is independent of spatial variables. This gives a random PDE for $U$,
\begin{align}\label{e.L061301}
    \partial_tU + \Gamma^{-1}\mathcal{P}Q(U,U) =  -\frac{1}{2}\nu^2 U, \qquad U(0)=U_0.
\end{align}
In what follows we will focus on this equation. Once we obtain solutions $U$ of \eqref{e.L061301}, we can obtain solutions $V=e^{\nu W}U$ of \eqref{e.071303} with the same regularity as $U$. Since there is no dissipation, the contraction mapping argument used in Section~\ref{sec:diffusion} based on the mild formulation is no longer suitable. Moreover, system \eqref{e.L061301} is still ill-posed in Gevrey class of order strictly greater than $1$ due to the lack of dissipation \cite{renardy2009ill,han2016ill,ibrahim2021finite}. Thus, one has to work in the analytic class, i.e., $\G_{\phi}^{\sigma,s}$ with $s=1$.

We shall solve the random equation \eqref{e.L061301} in a pathwise manner on $[0,T]$ for any fixed $T>0$. For each fixed $\omega\in \Omega$, the sample path $t\to W_t(\omega)$ is continuous, thus  
\[M_{\omega}:=\sup_{t\in[0,T]}\Gamma^{-1}(t)<\infty.\] 
Let the analytic radius $\phi$ be such that $\phi(t)> 0$ and $\phi'(t)\leq 0$ for all $t\in[0,T]$. By using a Galerkin approximation scheme, one can obtain the local existence result as summarized in Proposition~\ref{p.072901} below. To this end, we first recall estimates on the nonlinear term (cf. \cite[Lemma A.1 and A.3]{ghoul2022effect}): for any $\phi\geq 0$ and $\sigma>2$,
\begin{align}\label{e.L061601}
    |\langle e^{\phi A}A^{\sigma}Q(U,U),e^{\phi A}A^{\sigma}U\rangle|_{L^2}\leq C_{\sigma}\|U\|_{\dot \G_{\phi}^{\sigma,1}}\|U\|^2_{\dot \G_{\phi}^{\sigma+\frac12,1}},
\end{align}
for any $U\in H\cap \G_{\phi}^{\sigma+\frac12,1}$.  

\begin{proposition}\label{p.072901}
    Let $\sigma>\frac52$ and $\phi_0>0$ be constants. For any $U_0\in \G_{\phi_0}^{\sigma, 1}$ and $\omega\in\Omega$, there exists a function $\phi\in C^1(\mathbb R)$ (depending on $\omega$) with $\phi(0)=\phi_0$, and a time $\mathcal{T}\leq T$ depending on $\|U_0\|_{\G_{\phi_0}^{\sigma, 1}}$, $\phi_0$, and $M_{\omega}$, such that there is a unique solution $U$ to \eqref{e.L061301} satisfying
    \[U\in C([0,\mathcal{T}];\G_{\phi}^{\sigma,1})\cap L^2(0,\mathcal{T};\G_{\phi}^{\sigma+\frac12,1}).\] 
\end{proposition} 
\begin{proof}
    The proof follows similarly to \cite[Theorem 3.1]{ghoul2022effect}, and we highlight two main differences. First, the additional damping term does not introduce any complications. Second, while \cite[Theorem 3.1]{ghoul2022effect} established that  $U\in L^\infty(0,\mathcal{T};\G_{\phi}^{\sigma,1})\cap L^2(0,\mathcal{T};\G_{\phi}^{\sigma+\frac12,1})$, we will prove here that $U\in C([0,\mathcal{T}];\G_{\phi}^{\sigma,1})$. From Lemma \ref{lemma:a1}, the definition of the nonlinear term $Q(U,U)$, and the fact that $\sigma>\frac52$, one can infer that 
    \begin{align}\label{e.Q071301}
        \|e^{\phi A} A^{\sigma-\frac12} \mathcal P Q(U,U)\|_{L^2} = \sup\limits_{\|\phi\|_{L^2} =1} \langle e^{\phi A} A^{\sigma-\frac12} \mathcal P Q(U,U), \phi\rangle  
       \leq C_{\sigma}\|U\|_{\dot\G_\phi^{\sigma,1}}\|U\|_{\dot\G_\phi^{\sigma+\frac12,1}} .
    \end{align}
    Since $ \partial_t ( e^{\phi A} A^{\sigma} U ) = \dot\phi e^{\phi A} A^{\sigma+1} U + e^{\phi A} A^{\sigma} \partial_t U$, using \eqref{e.L061301} and \eqref{e.Q071301}, we get
    \begin{align*}
       \| \partial_t (e^{\phi A} A^{\sigma} U) \|_{H^{-\frac12}} \leq & C \left(\|e^{\phi A} A^{\sigma+1} U \|_{H^{-\frac12}} +  \|e^{\phi A} A^{\sigma} \mathcal P Q(U,U)\|_{H^{-\frac12}} + \|e^{\phi A} A^{\sigma} U \|_{H^{-\frac12}} \right)
       \\
       \leq & C \left(\|U\|_{\G_{\phi}^{\sigma+\frac12,1}} +  \|U\|_{\G_{\phi}^{\sigma,1}}\|U\|_{\G_{\phi}^{\sigma+\frac12,1}}\right).
    \end{align*}
     This implies that $\partial_t ( e^{\phi A} A^{\sigma} U ) \in L^2(0,\mathcal T; H^{-\frac12})$. As $e^{\phi A} A^{\sigma} U \in L^2(0,\mathcal T; H^{\frac12})$ and $H^{-\frac12}$ is the dual space of $H^{\frac12}$, by the Lions–Magenes lemma, we infer that $e^{\phi A} A^{\sigma} U \in C([0,\mathcal T]; L^2)$, and thus $U\in C([0,\mathcal{T}];\G_{\phi}^{\sigma,1})$.
\end{proof}

We next show that by appropriately choosing the analytic radius and utilizing the damping term, a global solution exists for small initial data with quantifiable high probability. Let $\alpha>0$, $0<\beta<\frac{\nu^2}{2}$, and recall the sample set $\Omega_{\alpha,\beta,\nu} $ defined in \eqref{def:samples}, which has probability 
 $\mathbb{P}(\Omega_{\alpha,\beta,\nu})\geq 1-e^{-\frac{\alpha\beta}{\nu^2}}$. 

\begin{proposition}\label{prop:damping}
    Let $\sigma>\frac52$ and $\phi_0>0$. For the constant $C_\sigma$ appearing in \eqref{e.L061601}, assume that
    \begin{align}\label{e.071301}
        (\nu^2-2\beta)\phi_0>4C_{\sigma}.
    \end{align}
    For any $\omega\in \Omega_{\alpha,\beta,\nu}$ and initial data $U_0$ satisfying 
    \begin{align}\label{e.071302}
        e^{\alpha}\|U_0\|_{\G_{\phi_0}^{\sigma, 1}}\leq \frac{(\nu^2-2\beta)\phi_0}{4C_{\sigma}}-1,
    \end{align}
    there exists a unique global solution $U\in C([0,\infty);\G_{\phi}^{\sigma,1})$ to system \eqref{e.L061301} with analytic radius 
    \begin{align}\label{eq.radius}
        \phi=\phi(t)=\phi_0-\frac{4C_{\sigma}}{\nu^2-2\beta}\left(e^{\alpha}\|V_0\|_{\G_{\phi_0}^{\sigma, 1}}+1\right)\left(1-e^{-(\frac{\nu^2}{2}-\beta)t}\right).
    \end{align}
\end{proposition}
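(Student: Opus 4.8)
The plan is to derive a differential inequality for the homogeneous analytic energy $\|U(t)\|_{\dot\G_{\phi(t)}^{\sigma,1}}^2$ and to choose the radius $\phi(t)$ in \eqref{eq.radius} precisely so that this energy stays bounded by its initial value, preventing blowup. First I would take a local solution $U\in C([0,\mathcal T];\G_\phi^{\sigma,1})\cap L^2(0,\mathcal T;\G_\phi^{\sigma+\frac12,1})$ from Proposition~\ref{p.072901} and compute, using equation \eqref{e.L061301}, the evolution
\begin{align}\label{e.damp-energy}
    \frac12\frac{d}{dt}\|U\|_{\dot\G_\phi^{\sigma,1}}^2 = \dot\phi\|U\|_{\dot\G_\phi^{\sigma+\frac12,1}}^2 - \frac{\nu^2}{2}\|U\|_{\dot\G_\phi^{\sigma,1}}^2 - \Gamma^{-1}\langle e^{\phi A}A^\sigma \mathcal P Q(U,U), e^{\phi A}A^\sigma U\rangle,
\end{align}
where the first term comes from differentiating the analytic weight $e^{\phi A}$ (producing the extra half-derivative $A^{1/2}$), the second from the damping $-\frac12\nu^2 U$, and the third from the nonlinearity. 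I would bound the nonlinear term by \eqref{e.L061601}, together with the boundedness of $\mathcal P$ from \eqref{Leray:bdd} and the bound $\Gamma^{-1}(t)=e^{\nu W_t}\leq e^{\phi_0(t)}\le e^{\alpha}$ valid on $\Omega_{\alpha,\beta,\nu}$ (since $\nu W_t\le \alpha+\beta t$ and we will have $\phi\le\phi_0$), to get $C_\sigma e^{\alpha}\|U\|_{\dot\G_\phi^{\sigma,1}}\|U\|_{\dot\G_\phi^{\sigma+\frac12,1}}^2$.

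The decisive algebraic step is then to choose $\dot\phi$ to absorb the bad half-derivative terms. Since $\|U\|_{\dot\G_\phi^{\sigma+\frac12,1}}^2$ appears with coefficient $\dot\phi$ from the weight and is multiplied by $C_\sigma e^\alpha\|U\|_{\dot\G_\phi^{\sigma,1}}$ from the nonlinearity, I would require the running sup of $e^\alpha\|U\|$ to be controlled by a constant $K$ and set $\dot\phi = -\frac{4C_\sigma}{\nu^2-2\beta}(e^\alpha\|V_0\|_{\G_{\phi_0}^{\sigma,1}}+1)(\frac{\nu^2}{2}-\beta)e^{-(\frac{\nu^2}{2}-\beta)t}$, which is exactly the derivative of \eqref{eq.radius} and is $\le 0$ as required by Proposition~\ref{p.072901}. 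With this choice the half-derivative terms combine into $(\dot\phi + C_\sigma e^\alpha K)\|U\|_{\dot\G_\phi^{\sigma+\frac12,1}}^2$, and one verifies that as long as $e^\alpha\|U(t)\|_{\dot\G_\phi^{\sigma,1}}\le e^\alpha\|U_0\|_{\G_{\phi_0}^{\sigma,1}}+1$ this sum is nonpositive, leaving
\[
    \frac{d}{dt}\|U\|_{\dot\G_\phi^{\sigma,1}}^2 \leq -\nu^2\|U\|_{\dot\G_\phi^{\sigma,1}}^2 \leq 0.
\]
Hence $\|U(t)\|_{\dot\G_{\phi(t)}^{\sigma,1}}$ is nonincreasing, and the $L^2$ part of the norm decays as well by the same cancellation $\langle \mathcal P Q(U,U),U\rangle=0$ used in Proposition~\ref{p.061702}.

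To make this rigorous I would run a continuity (bootstrap) argument: define the maximal time on which the a priori bound $e^\alpha\|U(t)\|_{\G_\phi^{\sigma,1}}\le e^\alpha\|U_0\|_{\G_{\phi_0}^{\sigma,1}}+1$ holds, show via the energy inequality that the norm is actually nonincreasing there so the bound is never saturated, and conclude the bound propagates. The conditions \eqref{e.071301} and \eqref{e.071302} are exactly what guarantee $\phi(t)$ stays strictly positive for all $t\ge0$: the total decrease of $\phi$ is $\frac{4C_\sigma}{\nu^2-2\beta}(e^\alpha\|V_0\|_{\G_{\phi_0}^{\sigma,1}}+1)$, and \eqref{e.071302} bounds $e^\alpha\|U_0\|$ so that $\phi_0$ minus this quantity remains positive, i.e. $\inf_{t\ge0}\phi(t)>0$. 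Since the local existence time in Proposition~\ref{p.072901} depends only on $\|U_0\|_{\G_{\phi_0}^{\sigma,1}}$, $\phi_0$, and $M_\omega$, and all three stay controlled (the analytic radius never collapses and the energy never grows), I can extend the local solution by finitely many uniform steps past any finite $T$, yielding the global solution $U\in C([0,\infty);\G_\phi^{\sigma,1})$. I expect the main obstacle to be the bookkeeping that couples the choice of $\dot\phi$ to the not-yet-established uniform bound on $\|U\|$ — the $\dot\phi$ I want to use depends on the very norm I am trying to control — which is why the self-consistent bootstrap with the $+1$ slack in \eqref{e.071302} is essential rather than a direct Grönwall estimate.
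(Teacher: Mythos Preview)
Your overall strategy---energy estimate, nonlinear bound \eqref{e.L061601}, bootstrap/continuity argument---is the same as the paper's, but there is a genuine gap in the core step. The bound $\Gamma^{-1}(t)=e^{\nu W_t}\le e^{\alpha}$ is not valid on $\Omega_{\alpha,\beta,\nu}$: that set only gives $\nu W_t\le \alpha+\beta t$, so $\Gamma^{-1}(t)\le e^{\alpha+\beta t}$, which grows in $t$. Even if one ignored this and used $e^{\alpha}$, your bootstrap does not close: under the assumption $e^{\alpha}\|U(t)\|\le e^{\alpha}\|U_0\|+1$, the coefficient in front of $\|U\|_{\dot\G_\phi^{\sigma+\frac12,1}}^2$ is at best
\[
\dot\phi(t)+C_\sigma\bigl(e^{\alpha}\|U_0\|+1\bigr)
= C_\sigma\bigl(e^{\alpha}\|U_0\|+1\bigr)\bigl(1-2e^{-(\frac{\nu^2}{2}-\beta)t}\bigr),
\]
which is negative only for $t<\frac{\ln 2}{\nu^2/2-\beta}$ and becomes positive thereafter. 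So the ``$+1$ slack'' does not save you; the energy inequality \emph{as you wrote it} cannot yield $\frac{d}{dt}\|U\|^2\le 0$ for all $t$.

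The missing idea, which the paper supplies, is to let the damping do its job inside the bootstrap variable. Writing $e^{\alpha+\beta t}=e^{-(\frac{\nu^2}{2}-\beta)t}\,e^{\alpha+\frac{\nu^2}{2}t}$, the coefficient becomes
\[
\dot\phi(t)+C_\sigma e^{-(\frac{\nu^2}{2}-\beta)t}\Bigl(e^{\alpha+\frac{\nu^2}{2}t}\|U\|_{\G_\phi^{\sigma,1}}+1\Bigr),
\]
and with the given $\phi$ this equals $C_\sigma e^{-(\frac{\nu^2}{2}-\beta)t}\bigl(e^{\alpha+\frac{\nu^2}{2}t}\|U\|-2e^{\alpha}\|U_0\|-1\bigr)$. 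The continuity argument is then run on the weighted quantity $e^{\alpha+\frac{\nu^2}{2}t}\|U(t)\|$ rather than on $\|U(t)\|$ itself: as long as this stays below $2e^{\alpha}\|U_0\|+1$, the coefficient is nonpositive, Gr\"onwall gives $\|U(t)\|\le \|U_0\|e^{-\frac{\nu^2}{2}t}$, hence $e^{\alpha+\frac{\nu^2}{2}t}\|U(t)\|\le e^{\alpha}\|U_0\|$, and the bound never saturates. In short, you must build the exponential decay $e^{-\frac{\nu^2}{2}t}$ into the bootstrap hypothesis; tracking $\|U\|$ alone cannot compensate for $\dot\phi\to 0$.
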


\begin{proof} 
    From equation \eqref{e.L061301}, one has 
    \begin{align*}
        \begin{split}
            \frac12\frac{d}{dt}\|U\|_{\G_{\phi}^{\sigma,1}}^2 + \frac{\nu^2}{2} \|U\|_{\G_{\phi}^{\sigma,1}}^2
            &= \phi'(t)\|U\|_{\dot \G_{\phi}^{\sigma+\frac12,1}}^2+\Gamma^{-1}\langle e^{\phi(t) A}A^{\sigma}Q(U,U),e^{\phi(t) A}A^{\sigma}U\rangle\\
            &\leq \left(\phi'(t)+C_{\sigma}\Gamma^{-1}\|U\|_{\G_{\phi}^{\sigma,1}}\right)\|U\|_{\dot \G_{\phi}^{\sigma+\frac12,1}}^2
        \end{split}
    \end{align*}
    by the nonlinear estimate \eqref{e.L061601}. For $\omega\in \Omega_{\nu,\alpha,\beta}$, we have $ \Gamma^{-1}(t)=e^{\nu W_t}\leq e^{\alpha+\beta t}$ for all $t$. Thus, we derive that
    \begin{align}\label{e.Q071302}
        \begin{split}
            \frac12\frac{d}{dt}\|U\|_{\G_{\phi}^{\sigma,1}}^2 &+ C_\sigma e^{-(\frac{\nu^2}{2}-\beta)t}\|U\|_{
            \dot\G_{\phi}^{\sigma+\frac12,1}}^2+\frac{\nu^2}{2} \|U\|_{\G_{\phi}^{\sigma,1}}^2\\
            &\leq \left(\phi'(t)+C_{\sigma}\Gamma^{-1}\|U\|_{\G_{\phi}^{\sigma,1}}+C_{\sigma} e^{-(\frac{\nu^2}{2}-\beta)t}\right)\|U\|_{
            \dot\G_{\phi}^{\sigma+\frac12,1}}^2\\
            &\leq \left(\phi'(t)+C_{\sigma} e^{-(\frac{\nu^2}{2}-\beta)t}\left( e^{\alpha+\frac{\nu^2}{2}t}\|U\|_{\G_{\phi}^{\sigma,1}}+1\right)\right)\|U\|_{
            \dot\G_{\phi}^{\sigma+\frac12,1}}^2\\
            &=C_{\sigma}e^{-(\frac{\nu^2}{2}-\beta)t}\left(e^{\alpha+\frac{\nu^2}{2}t}\|U\|_{\G_{\phi}^{\sigma,1}}-2e^{\alpha}\|U_0\|_{\G_{\phi_0}^{\sigma, 1}}-1\right)\|U\|_{
            \dot\G_{\phi}^{\sigma+\frac12,1}}^2.
        \end{split}
    \end{align}
    
    Define the stopping time $T^\ast$
    \begin{align*}
        T^* = \min \left\{t\geq 0: e^{\alpha+\frac{\nu^2}{2}t}\|U\|_{\G_{\phi(t)}^{\sigma,1}}-2e^{\alpha}\|U_0\|_{\G_{\phi_0}^{\sigma, 1}}-1 \geq 0 \right\}.
    \end{align*}
    By time continuity, we know that $T^*>0$. We will prove that $T^\ast = \infty$ by contradiction. Assume that $T^*<\infty$, then we know that 
    \begin{align}\label{Q:06191}
        e^{\alpha+\frac{\nu^2}{2}T^*}\|U\|_{\G_{\phi(T^*)}^{\sigma,1}}-2e^{\alpha}\|U_0\|_{\G_{\phi_0}^{\sigma, 1}}-1 = 0,
    \end{align}
    and for $t\in [0, T^*]$ we have
    \[
     \frac12\frac{d}{dt}\|U\|_{\G_{\phi}^{\sigma,1}}^2 + C_\sigma e^{-(\frac{\nu^2}{2}-\beta)t}\|U\|_{\dot\G_{\phi}^{\sigma+\frac12,1}}^2+\frac{\nu^2}{2} \|U\|_{\G_{\phi}^{\sigma,1}}^2 \leq 0.
    \]
    Then one can apply the Gronwall inequality to conclude that
    \[e^{\alpha+\frac{\nu^2}{2}t}\|U\|_{\G_{\phi(t)}^{\sigma,1}}\leq e^{\alpha}\|U_0\|_{\G_{\phi_0}^{\sigma, 1}},\]
    for $t\in[0,T^*]$. In particular, the above holds for $t=T^*$, which leads to a contradiction to \eqref{Q:06191}. Consequently, $T^*=\infty$. Thanks to condition \eqref{e.071302}, one can infer that $\phi(t) >0$ for all $t\geq 0$, thus the analytic solution $U$ exists globally. 
\end{proof}

Finally, the global existence of $V$ with high probability will be established using the construction $V = \Gamma^{-1}U$. This result is summarized below.

\begin{theorem}\label{thm:damping}
    Let $\sigma>\frac52$ and $\phi_0>0$. For any small $\varepsilon\in (0,1)$ and initial condition $V_0\in \G_{\phi_0}^{\sigma,1}$, there exists, with probability at least $1-\varepsilon$, a unique global solution $V\in C([0,\infty);\G_{\phi}^{\sigma,1})$ to system \eqref{e.071303} with analytic radius $\phi(t)$ defined in \eqref{eq.radius}, provided that the damping intensity $\nu$ is sufficiently large.
\end{theorem}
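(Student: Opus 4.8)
The plan is to mirror the structure of the proof of Theorem~\ref{theorem:global}: I would fix the parameters $\alpha,\beta$ in terms of $\varepsilon$ and $\nu$ so that the hypotheses of Proposition~\ref{prop:damping} hold while the good sample set $\Omega_{\alpha,\beta,\nu}$ carries probability at least $1-\varepsilon$, invoke Proposition~\ref{prop:damping} to obtain a global solution $U$ of the random PDE \eqref{e.L061301}, and then transfer back through $V=\Gamma^{-1}U=e^{\nu W_t}U$ to solve \eqref{e.071303}.

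First, given $\varepsilon\in(0,1)$, I would set $\alpha=-4\ln\varepsilon$ (so $e^{-\alpha/4}=\varepsilon$) and take $\beta=\nu^2/4$, which satisfies $0<\beta<\nu^2/2$ and yields $\nu^2-2\beta=\nu^2/2$. With this choice the probability estimate attached to \eqref{def:samples} reads $\mathbb{P}(\Omega_{\alpha,\beta,\nu})\geq 1-e^{-\alpha\beta/\nu^2}=1-e^{-\alpha/4}=1-\varepsilon$, and crucially the exponent $\alpha\beta/\nu^2=\alpha/4$ is independent of $\nu$, so we remain free to enlarge $\nu$ without degrading the probability.

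Second, I would verify the two hypotheses of Proposition~\ref{prop:damping}. Condition \eqref{e.071301} becomes $\frac{\nu^2}{2}\phi_0>4C_\sigma$, and since $U_0=V_0$, condition \eqref{e.071302} becomes $e^\alpha\|V_0\|_{\G_{\phi_0}^{\sigma,1}}\leq \frac{\nu^2\phi_0}{8C_\sigma}-1$. Because $\alpha$, $\phi_0$, $C_\sigma$, and $\|V_0\|_{\G_{\phi_0}^{\sigma,1}}$ are all fixed once $\varepsilon$ and the data are chosen, both right-hand sides grow like $\nu^2$, so both conditions hold for all sufficiently large $\nu$ with no tension between them. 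Proposition~\ref{prop:damping} then produces, for every $\omega\in\Omega_{\alpha,\beta,\nu}$, a unique global solution $U\in C([0,\infty);\G_\phi^{\sigma,1})$ with the analytic radius $\phi$ of \eqref{eq.radius} positive for all $t$.

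Finally, I would pass to $V=\Gamma^{-1}U=e^{\nu W_t}U$. The structural point distinguishing this case from the diffusion case is that with $s=0$ the multiplier $\Gamma^{-1}(t)=e^{\nu W_t}$ is a scalar, frequency-independent factor, so it does not shift the analytic radius: $\|V(t)\|_{\G_\phi^{\sigma,1}}=e^{\nu W_t}\|U(t)\|_{\G_\phi^{\sigma,1}}$, and $V$ lies in the same class $\G_\phi^{\sigma,1}$ as $U$ (rather than a shifted one, as in Corollary~\ref{thm:local}). Since $t\mapsto e^{\nu W_t}$ is continuous and finite along every path, continuity of $U$ transfers to $V$, giving $V\in C([0,\infty);\G_\phi^{\sigma,1})$; by the It\^o transformation used to derive \eqref{e.L061301}, $V$ solves \eqref{e.071303}, and uniqueness of $V$ follows from that of $U$ under the bijection $U\leftrightarrow V$. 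Restricting to $\Omega_{\alpha,\beta,\nu}$ then yields the global solution on a set of probability at least $1-\varepsilon$. I expect no genuine obstacle beyond Proposition~\ref{prop:damping}; the only points requiring care are confirming that the scalar transformation preserves the Gevrey radius and that the parameter choices simultaneously satisfy \eqref{e.071301}, \eqref{e.071302}, and the probability bound.
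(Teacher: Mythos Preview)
Your proposal is correct and follows essentially the same approach as the paper: the paper makes the identical choices $\alpha=-4\ln\varepsilon$ and $\beta=\nu^2/4$, verifies that both \eqref{e.071301} and \eqref{e.071302} hold for $\nu$ large (writing this as the single condition $\nu^2\geq \frac{8C_\sigma}{\phi_0}(e^{\alpha}\|V_0\|_{\G_{\phi_0}^{\sigma,1}}+1)$), invokes Proposition~\ref{prop:damping}, and transfers back via $V=e^{\nu W_t}U$. Your additional observation that the scalar multiplier $e^{\nu W_t}$ preserves the analytic radius is exactly the implicit point behind the paper's one-line transfer step.
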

\begin{proof}
    For any $\varepsilon\in(0,1)$ and $V_0\in \G_{\phi_0}^{\sigma,1}$, set $\alpha=-4\ln \varepsilon$ so that $e^{-\frac\alpha4}=\varepsilon$. Choosing $\beta=\frac{\nu^2}4$ and $\nu$ sufficiently large such that such that 
    \begin{align*}
        \nu^2 \geq \frac{8C_\sigma}{\phi_0}(e^{\alpha}\|V_0\|_{\G_{\phi_0}^{\sigma,1}} +1) = \frac{8C_\sigma}{\phi_0}(\varepsilon^{-4}\|V_0\|_{\G_{\phi_0}^{\sigma,1}} +1),
    \end{align*}
    then both conditions \eqref{e.071301} and \eqref{e.071302} are satisfied. Since $U_0=V_0$, Proposition~\ref{prop:damping} ensures that there exists a unique global solution $U\in C([0,\infty);\G_{\phi}^{\sigma,1})$ to system \eqref{e.L061301} with analytic radius $\phi(t)$ defined in \eqref{eq.radius} for any $\omega \in \Omega_{\alpha,\beta,\nu}$. Consequently, using $V=e^{\nu W_t} U$, one obtains a unique global solution $V\in C([0,\infty);\G_{\phi}^{\sigma,1})$ to system \eqref{e.071303} for any $\omega \in \Omega_{\alpha,\beta,\nu}$, with probability at least $1-e^{-\frac{\alpha\beta}{\nu^2}} = 1-\varepsilon$.
\end{proof}

\begin{remark}
    Compared to the random diffusion case (Theorem \ref{theorem:global}), here we do not need to assume the initial analytic radius to be large. 
\end{remark}
\begin{remark}
    By taking $R=e^{\alpha}$, the event $\Omega_{\alpha,\beta,\nu}$ corresponds to the assertion that the geometric Brownian motion $\exp(\nu W_t-\beta t)$ never exceeds $R$, as in \cite{buckmaster2020surface}. This probability can be made arbitrarily close to 1 by choosing a sufficiently large $\alpha$. 
\end{remark}

\section{Conclusion}\label{sec:conclusion}
It is well-known that the deterministic inviscid PE is ill-posed in Sobolev spaces and Gevrey class of order strictly greater than $1$, and some analytic solutions can form singularities in finite time. Therefore, only local well-posedness in the analytic class is sought. In this work, we investigated the regularization effect of noise on the inviscid PE. 
We prove that appropriate random diffusion can restore local well-posedness in certain Gevrey classes with probability one. Moreover, global solutions can be obtained with high probability when the diffusion intensity and the Gevrey radius for the initial condition are sufficiently large. Additionaly, we establish that random damping can regularize the system, meaning that analytic solutions exist globally in time with high probability, provided that the damping intensity is sufficiently large.

It is interesting to observe how different types of random noise can regularize the system in various ways.
Random diffusion allows us to obtain solutions with a non-decreasing Gevrey radius. However, to achieve global solutions with high probability, we need to impose a sufficiently smooth condition on the initial data, specifically by ensuring it has a large enough Gevrey radius. On the other hand, in the case of random damping, we exploit the dissipation from a decreasing analytic radius to handle the nonlinearity, leading to solutions with a decreasing analytic radius. Nonetheless, a large damping intensity can slow the rate at which the analytic radius decreases, thereby enabling global solutions with high probability. 

It is intriguing to explore whether appropriate types of noise could restore local well-posedness for the inviscid PE in Sobolev spaces and Gevrey class of order $\leq \frac45$. We leave this investigation as a topic for future work.

\section*{Acknowledgments}
Q.L. was partially supported by an AMS-Simons Travel Grant. R.H. was partially supported by a grant from the Simons Foundation (MP-TSM-00002783).

\appendix
\section{Nonlinear estimates}
\begin{lemma}\label{lemma:a1}
    Given smooth periodic functions $f$, $g$, and $h$ such that $f$ and $g$ have zero means over $\mathbb T^3$, the following inequality holds for any $r\geq 0$, $\phi\geq 0$, and $\eta>0$:
    \begin{equation*}\label{lemma-type1-inequality}
    \begin{split}
        \left|\left\langle e^{\phi A} A^r  (fg),   h  \right\rangle\right| \leq  C_r\left( \|e^{\phi A} A^{r}  f\| \|e^{\phi A} A^{\frac32+\eta}  g\|  + \|e^{\phi A} A^{\frac32+\eta}  f\| \|e^{\phi A} A^{r}  g\| \right) \|  h\|,
    \end{split}
\end{equation*}
where $C_r$ is a constant depending on $r$.
\end{lemma}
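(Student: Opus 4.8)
The plan is to pass to the Fourier side and exploit the convolution structure of the product $fg$ together with the subadditivity of the analytic weight $e^{\phi|\cdot|}$. Writing $\widehat{fg}_k = \sum_{j}\hat f_j\hat g_{k-j}$ and expanding the pairing in frequency, I would first bound
\[
\left|\left\langle e^{\phi A}A^r(fg),\,h\right\rangle\right| \le \sum_{k}\sum_{j} e^{\phi|k|}\,|k|^r\,|\hat f_j|\,|\hat g_{k-j}|\,|\hat h_k|.
\]
Since $|k|\le|j|+|k-j|$ and $\phi\ge 0$, the weight factorizes as $e^{\phi|k|}\le e^{\phi|j|}e^{\phi|k-j|}$, while for $r\ge 0$ the elementary inequality $|k|^r\le C_r\bigl(|j|^r+|k-j|^r\bigr)$ distributes the derivative weight onto either $f$ or $g$. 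This produces two symmetric contributions; I would treat in detail the one in which $|k|^r$ is absorbed by $f$, the other being identical after interchanging the roles of $f$ and $g$, which yields the two terms on the right-hand side of the claim.

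The second step is to recognize the resulting triple sum as a convolution tested against $h$. Setting $F_j=|j|^r e^{\phi|j|}|\hat f_j|$, $G_m=e^{\phi|m|}|\hat g_m|$, and $H_k=|\hat h_k|$, the first contribution equals $\sum_k (F*G)_k\,H_k$, which by the Cauchy--Schwarz inequality is at most $\|F*G\|_{\ell^2}\|H\|_{\ell^2}$, and then by Young's convolution inequality ($\ell^2*\ell^1\hookrightarrow\ell^2$) is at most $\|F\|_{\ell^2}\|G\|_{\ell^1}\|H\|_{\ell^2}$. By Plancherel, $\|F\|_{\ell^2}=\|e^{\phi A}A^r f\|$ and $\|H\|_{\ell^2}=\|h\|$, so it only remains to control the $\ell^1$ factor $\|G\|_{\ell^1}$.

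The crux is to bound $\|G\|_{\ell^1}$ by the Sobolev--analytic norm of $g$. Because $g$ has zero mean, $\hat g_0=0$, so the sum runs only over nonzero frequencies; inserting and removing the weight $|m|^{3/2+\eta}$ and applying Cauchy--Schwarz gives
\[
\|G\|_{\ell^1}=\sum_{m\ne 0}|m|^{-(3/2+\eta)}\,|m|^{3/2+\eta}e^{\phi|m|}|\hat g_m|\le\Bigl(\sum_{m\ne 0}|m|^{-(3+2\eta)}\Bigr)^{1/2}\,\|e^{\phi A}A^{3/2+\eta}g\|.
\]
In three dimensions the series $\sum_{m\ne 0}|m|^{-(3+2\eta)}$ converges precisely because its exponent $3+2\eta$ exceeds $3$; this is exactly what forces the threshold $3/2+\eta=\tfrac d2+\eta$ and makes the zero-mean hypothesis indispensable (it is what allows dropping the divergent $m=0$ term). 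Collecting the estimates yields the first term $\le C_{r,\eta}\,\|e^{\phi A}A^r f\|\,\|e^{\phi A}A^{3/2+\eta}g\|\,\|h\|$, and the symmetric argument produces the second term with $f$ and $g$ exchanged, completing the bound with a constant depending on $r$ and on the fixed parameter $\eta$ (and on the dimension).

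I expect the only genuinely delicate point to be the convergence of the frequency series $\sum_{m\ne 0}|m|^{-(3+2\eta)}$; the rest is bookkeeping with Young's and Cauchy--Schwarz inequalities. All the structural hypotheses of the lemma — the zero-mean condition on $f$ and $g$ and the half-derivative-plus-$\eta$ gain — enter exactly at this step, which is why I would organize the proof so that this summability estimate is isolated and applied symmetrically to both $f$ and $g$.
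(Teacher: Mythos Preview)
Your proposal is correct and is essentially the same argument as the paper's: both pass to Fourier coefficients, factor the analytic weight via $e^{\phi|k|}\le e^{\phi|j|}e^{\phi|k-j|}$, split $|k|^r\le C_r(|j|^r+|k-j|^r)$, and reduce to the $\ell^1$-bound $\sum_{m\ne0}e^{\phi|m|}|\hat g_m|\le C_\eta\|e^{\phi A}A^{3/2+\eta}g\|$ using the convergence of $\sum_{m\ne0}|m|^{-3-2\eta}$. The only cosmetic difference is that you invoke Young's convolution inequality $\ell^2*\ell^1\hookrightarrow\ell^2$ as a packaged step, whereas the paper writes out the equivalent two Cauchy--Schwarz applications directly.
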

\begin{proof}
    We start with writing the Fourier representations of $f, g$ and $h$:
    \begin{eqnarray*}
    &&\hskip-.8in
     f(x) = \sum\limits_{j\in  2\pi\mathbb{Z}^3} \hat{f}_{j} e^{ ij\cdot x}, \quad
    g(x) = \sum\limits_{k\in 2\pi\mathbb{Z}^3} \hat{g}_{k} e^{ ik\cdot x}, \quad
    h(x) = \sum\limits_{l\in 2\pi\mathbb{Z}^3} \hat{h}_{l} e^{ il\cdot x}. 
    \end{eqnarray*}
    Then 
    \begin{eqnarray*}
    \left|\left\langle e^{\phi A} A^r  (fg),  h  \right\rangle\right| = \left|\left\langle fg,e^{\phi A} A^r  h  \right\rangle\right| \leq  \sum\limits_{j+k+l=0} |\hat{f}_{j}||\hat{g}_{k}||l|^{r} e^{\phi |l|} |\hat{h}_{l}|.
    \end{eqnarray*}
Since $|l| = |j+k| \leq |j|+|k|$, we have 
$
    |l|^{r} \leq (|j|+|k|)^{r} \leq C_r(|j|^{r} + |k|^{r})
$
and 
$e^{\phi |l|} \leq e^{\phi |j|} e^{\phi |k|}$,
thus
\begin{eqnarray*}
\left|\left\langle A^r  (fg),  h  \right\rangle\right| \leq  \sum\limits_{j+k+l=0} C_r (|j|^{r}+|k|^{r})e^{\phi |j|}|\hat{f}_{j}| e^{\phi |k|}|\hat{g}_{k}||\hat{h}_{l}| := A_1 + A_2.
\end{eqnarray*}
Regarding the term $A_1$, with $g$ having zero mean, using  the Cauchy–Schwarz inequality, we deduce 
\begin{eqnarray*}
A_1&&= \sum\limits_{j+k+l=0} C_r |j|^{r}  e^{\phi |j|} |\hat{f}_{j}| e^{\phi |k|}|\hat{g}_{k}| |\hat{h}_{l}| 
= C_r \sum\limits_{\substack{k\in 2\pi \mathbb{Z}^3 \\ k\neq 0} }  e^{\phi |k|}|\hat{g}_{k}|  \sum\limits_{\substack{j\in 2\pi \mathbb{Z}^3 \\ j\neq 0} } |j|^{r} e^{\phi |j|} |\hat{f}_{j}|  |\hat{h}_{-j-k}| \nonumber\\
&&\leq C_r \Big( \sum\limits_{\substack{k\in 2\pi \mathbb{Z}^3 \\ k\neq 0} } |k|^{-3-2\eta}\Big)^{\frac{1}{2}} \Big( \sum\limits_{\substack{k\in 2\pi \mathbb{Z}^3 \\ k\neq 0} } |k|^{3+2\eta}  e^{2\phi |k|} |\hat{g}_{k}|^2\Big)^{\frac{1}{2}}\\
&&\hskip1in \times\sup\limits_{k\in 2\pi \mathbb{Z}^3}\Big( \sum\limits_{\substack{j\in 2\pi \mathbb{Z}^3 \\ j\neq 0} } |j|^{2r} e^{2\phi |j|} |\hat{f}_{j}|^2\Big)^{\frac{1}{2}} \Big( \sum\limits_{\substack{j\in 2\pi \mathbb{Z}^3 \\ j\neq 0} } |\hat{h}_{-j-k}|^2\Big)^{\frac{1}{2}} \nonumber\\
&& \leq C_r \|e^{\phi A} A^{r}  f\| \|e^{\phi A} A^{\frac32+\eta}  g\| \|  h\|,
\end{eqnarray*}
for any $\eta>0$. 

For $A_2$, since $f$ has zero mean, similarly we have
\begin{eqnarray*}
&&\hskip-.8in 
A_2 = \sum\limits_{j+k+l=0} C_r |k|^{2r} e^{\phi |j|}|\hat{f}_{j}| e^{\phi |k|}|\hat{g}_{k}| |\hat{h}_{l}| 
\leq C_r \|e^{\phi A} A^{\frac32+\eta}  f\| \|e^{\phi A} A^{r}  g\| \|h\|.
\end{eqnarray*}
\end{proof}

\bibliographystyle{plain}
\bibliography{Reference}

\begin{thebibliography}{10}

\bibitem{adams2003sobolev}
Robert~A Adams and John~JF Fournier.
\newblock {\em Sobolev spaces}.
\newblock Elsevier, 2003.

\bibitem{agresti2022stochastic-2}
Antonio Agresti, Matthias Hieber, Amru Hussein, and Martin Saal.
\newblock The stochastic primitive equations with non-isothermal turbulent pressure.
\newblock {\em arXiv preprint arXiv:2210.05973}, 2022.

\bibitem{agresti2022stochastic-1}
Antonio Agresti, Matthias Hieber, Amru Hussein, and Martin Saal.
\newblock The stochastic primitive equations with transport noise and turbulent pressure.
\newblock {\em Stochastics and Partial Differential Equations: Analysis and Computations}, pages 1--81, 2022.

\bibitem{azerad2001mathematical}
Pascal Az{\'e}rad and Francisco Guill{\'e}n.
\newblock Mathematical justification of the hydrostatic approximation in the primitive equations of geophysical fluid dynamics.
\newblock {\em SIAM Journal on Mathematical Analysis}, 33(4):847--859, 2001.

\bibitem{brenier1999homogeneous}
Yann Brenier.
\newblock Homogeneous hydrostatic flows with convex velocity profiles.
\newblock {\em Nonlinearity}, 12(3):495, 1999.

\bibitem{brenier2003remarks}
Yann Brenier.
\newblock Remarks on the derivation of the hydrostatic {Euler} equations.
\newblock {\em Bulletin des Sciences Mathematiques}, 127(7):585--595, 2003.

\bibitem{brzezniak2021well}
Zdzislaw Brze{\'z}niak and Jakub Slav{\'\i}k.
\newblock Well-posedness of the {3D} stochastic primitive equations with multiplicative and transport noise.
\newblock {\em Journal of Differential Equations}, 296:617--676, 2021.

\bibitem{buckmaster2020surface}
Tristan Buckmaster, Andrea Nahmod, Gigliola Staffilani, and Klaus Widmayer.
\newblock The surface quasi-geostrophic equation with random diffusion.
\newblock {\em International Mathematics Research Notices}, 2020(23):9370--9385, 2020.

\bibitem{cao2015finite}
Chongsheng Cao, Slim Ibrahim, Kenji Nakanishi, and Edriss~S Titi.
\newblock Finite-time blowup for the inviscid primitive equations of oceanic and atmospheric dynamics.
\newblock {\em Communications in Mathematical Physics}, 337(2):473--482, 2015.

\bibitem{cao2007global}
Chongsheng Cao and Edriss~S Titi.
\newblock Global well-posedness of the three-dimensional viscous primitive equations of large scale ocean and atmosphere dynamics.
\newblock {\em Annals of Mathematics}, pages 245--267, 2007.

\bibitem{chen2022stable}
Jiajie Chen and Thomas~Y Hou.
\newblock Stable nearly self-similar blowup of the {2D Boussinesq and 3D Euler} equations with smooth data {I}: Analysis.
\newblock {\em arXiv preprint arXiv:2210.07191}, 2022.

\bibitem{collot2023stable}
Charles Collot, Slim Ibrahim, and Quyuan Lin.
\newblock Stable singularity formation for the inviscid primitive equations.
\newblock {\em Annales de l'Institut Henri Poincar{\'e} C}, 2023.

\bibitem{debussche2011local}
Arnaud Debussche, Nathan Glatt-Holtz, and Roger Temam.
\newblock Local martingale and pathwise solutions for an abstract fluids model.
\newblock {\em Physica D: Nonlinear Phenomena}, 240(14-15):1123--1144, 2011.

\bibitem{debussche2012global}
Arnaud Debussche, Nathan Glatt-Holtz, Roger Temam, and Mohammed Ziane.
\newblock Global existence and regularity for the {3D} stochastic primitive equations of the ocean and atmosphere with multiplicative white noise.
\newblock {\em Nonlinearity}, 25(7):2093, 2012.

\bibitem{elgindi2021finite}
Tarek~M Elgindi.
\newblock Finite-time singularity formation for {$ C^{1,\alpha} $} solutions to the incompressible {Euler} equations on {$\mathbb {R}^ 3$}.
\newblock {\em Annals of Mathematics}, 194(3):647--727, 2021.

\bibitem{furukawa2020rigorous}
Ken Furukawa, Yoshikazu Giga, Matthias Hieber, Amru Hussein, Takahito Kashiwabara, and Marc Wrona.
\newblock Rigorous justification of the hydrostatic approximation for the primitive equations by scaled {N}avier--{S}tokes equations.
\newblock {\em Nonlinearity}, 33(12):6502, 2020.

\bibitem{ghoul2022effect}
Tej~Eddine Ghoul, Slim Ibrahim, Quyuan Lin, and Edriss~S Titi.
\newblock On the effect of rotation on the life-span of analytic solutions to the {3D} inviscid primitive equations.
\newblock {\em Archive for rational mechanics and analysis}, 243(2):747--806, 2022.

\bibitem{glatt2014existence}
Nathan Glatt-Holtz, Igor Kukavica, Vlad Vicol, and Mohammed Ziane.
\newblock Existence and regularity of invariant measures for the three dimensional stochastic primitive equations.
\newblock {\em Journal of Mathematical Physics}, 55(5):051504, 2014.

\bibitem{glatt2011pathwise}
Nathan Glatt-Holtz and Roger Temam.
\newblock Pathwise solutions of the {2D} stochastic primitive equations.
\newblock {\em Applied Mathematics \& Optimization}, 63(3):401--433, 2011.

\bibitem{glatt2008stochastic}
Nathan Glatt-Holtz and Mohammed Ziane.
\newblock The stochastic primitive equations in two space dimensions with multiplicative noise.
\newblock {\em Discrete \& Continuous Dynamical Systems-B}, 10(4):801, 2008.

\bibitem{glatt2014local}
Nathan~E Glatt-Holtz and Vlad~C Vicol.
\newblock Local and global existence of smooth solutions for the stochastic {Euler} equations with multiplicative noise.
\newblock {\em The Annals of Probability}, 42(1):80--145, 2014.

\bibitem{grenier1999derivation}
Emmanuel Grenier.
\newblock On the derivation of homogeneous hydrostatic equations.
\newblock {\em ESAIM: Mathematical Modelling and Numerical Analysis}, 33(5):965--970, 1999.

\bibitem{han2016ill}
Daniel Han-Kwan and Toan~T Nguyen.
\newblock Ill-posedness of the hydrostatic {Euler} and singular {Vlasov} equations.
\newblock {\em Archive for Rational Mechanics and Analysis}, 221(3):1317--1344, 2016.

\bibitem{hieber2016global}
Matthias Hieber and Takahito Kashiwabara.
\newblock Global strong well-posedness of the three dimensional primitive equations in {$L^p$}-spaces.
\newblock {\em Archive for Rational Mechanics and Analysis}, 221(3):1077--1115, 2016.

\bibitem{hu2023local}
Ruimeng Hu and Quyuan Lin.
\newblock Local martingale solutions and pathwise uniqueness for the three-dimensional stochastic inviscid primitive equations.
\newblock {\em Stochastics and Partial Differential Equations: Analysis and Computations}, 11(4):1470--1518, 2023.

\bibitem{hu2023pathwise}
Ruimeng Hu and Quyuan Lin.
\newblock Pathwise solutions for stochastic hydrostatic {E}uler equations and hydrostatic {N}avier-{S}tokes equations under the local {R}ayleigh condition.
\newblock {\em arXiv preprint arXiv:2301.07810}, 2023.

\bibitem{ibrahim2021finite}
Slim Ibrahim, Quyuan Lin, and Edriss~S Titi.
\newblock Finite-time blowup and ill-posedness in sobolev spaces of the inviscid primitive equations with rotation.
\newblock {\em Journal of Differential Equations}, 286:557--577, 2021.

\bibitem{kobelkov2006existence}
Georgij~M Kobelkov.
\newblock Existence of a solution ‘in the large’ for the {3D} large-scale ocean dynamics equations.
\newblock {\em Comptes Rendus Mathematique}, 343(4):283--286, 2006.

\bibitem{kukavica2014local}
Igor Kukavica, Nader Masmoudi, Vlad Vicol, and Tak~Kwong Wong.
\newblock On the local well-posedness of the {Prandtl and hydrostatic Euler} equations with multiple monotonicity regions.
\newblock {\em SIAM Journal on Mathematical Analysis}, 46(6):3865--3890, 2014.

\bibitem{kukavica2011local}
Igor Kukavica, Roger Temam, Vlad~C Vicol, and Mohammed Ziane.
\newblock Local existence and uniqueness for the hydrostatic {Euler} equations on a bounded domain.
\newblock {\em Journal of Differential Equations}, 250(3):1719--1746, 2011.

\bibitem{kukavica2007regularity}
Igor Kukavica and Mohammed Ziane.
\newblock On the regularity of the primitive equations of the ocean.
\newblock {\em Nonlinearity}, 20(12):2739, 2007.

\bibitem{li2019primitive}
Jinkai Li and Edriss~S Titi.
\newblock The primitive equations as the small aspect ratio limit of the {Navier--Stokes} equations: Rigorous justification of the hydrostatic approximation.
\newblock {\em Journal de Math{\'e}matiques Pures et Appliqu{\'e}es}, 124:30--58, 2019.

\bibitem{li2022primitive}
Jinkai Li, Edriss~S Titi, and Guozhi Yuan.
\newblock The primitive equations approximation of the anisotropic horizontally viscous {3D Navier-Stokes} equations.
\newblock {\em Journal of Differential Equations}, 306:492--524, 2022.

\bibitem{masmoudi2012h}
Nader Masmoudi and Tak~Kwong Wong.
\newblock On the {$H^s$} theory of hydrostatic {Euler} equations.
\newblock {\em Archive for Rational Mechanics and Analysis}, 204(1):231--271, 2012.

\bibitem{renardy2009ill}
Michael Renardy.
\newblock Ill-posedness of the hydrostatic {Euler and Navier-Stokes} equations.
\newblock {\em Archive for rational mechanics and analysis}, 194(3):877--886, 2009.

\bibitem{resnick2013adventures}
Sidney~I Resnick.
\newblock {\em Adventures in stochastic processes}.
\newblock Springer Science \& Business Media, 2013.

\bibitem{rosenzweig2023global}
Matthew Rosenzweig and Gigliola Staffilani.
\newblock Global solutions of aggregation equations and other flows with random diffusion.
\newblock {\em Probability Theory and Related Fields}, 185(3):1219--1262, 2023.

\bibitem{wong2015blowup}
Tak~Kwong Wong.
\newblock Blowup of solutions of the hydrostatic {Euler} equations.
\newblock {\em Proceedings of the American Mathematical Society}, 143(3):1119--1125, 2015.

\end{thebibliography}
\end{document}